 \documentclass[pdflatex,sn-mathphys-num]{sn-jnl}
 \usepackage{enumitem}
 \setlist[enumerate]{leftmargin=.5in}
 \setlist[itemize]{leftmargin=.5in}

\usepackage{amsmath,amssymb,bm}
\usepackage{cases}

\usepackage{graphicx}
\usepackage{subcaption} 

\usepackage{rotating}
\usepackage{multirow}
\usepackage{booktabs}
\usepackage{longtable}
\usepackage{tabularx}
\usepackage{placeins}
\usepackage{afterpage}  

\makeatletter
\renewcommand\LT@makecaption[3]{%
	\LT@mcol\LT@cols c{%
		\hbox to\z@{\hss\parbox{\LTcapwidth}{\@makecaption{#2}{#3}}\hss}}}
\makeatother
\DeclareMathOperator*{\dist}{dist}

\DeclareMathOperator*{\Diag}{Diag}

\DeclareMathOperator*{\st}{s.t.}
\def\prox{{\rm Prox}}
\def\sign{{\rm sign}}
\def\ext{{\rm ext}}
\def\tol{{\rm tol}}

\def\A{{\mathcal A}}
\def\B{{\mathcal B}}

\def\F{{\mathcal F}}

\def\I{{\mathcal I}}
\def\J{{\mathcal J}}
\def\K{{\mathcal K}}
\def\L{{\mathcal L}}

\def\N{{\mathcal N}}
\def\O{{\mathcal O}}
\def\P{{\mathcal P}}
\def\Q{{\mathcal Q}}

\def\T{{\mathcal T}}

\def\V{{\mathcal V}}
\def\W{{\mathcal W}}

\def\Y{{\mathcal Y}}

\usepackage{lipsum}
\usepackage{amsfonts}
\usepackage{graphicx}
\usepackage{epstopdf}
\usepackage{algorithm}%
\usepackage{algorithmicx}%
\ifpdf
\DeclareGraphicsExtensions{.eps,.pdf,.png,.jpg}
\else
\DeclareGraphicsExtensions{.eps}
\fi

\newtheorem{lemma}{Lemma}[section] 
\theoremstyle{thmstyleone}%
\newtheorem{theorem}{Theorem}
\newtheorem{proposition}[theorem]{Proposition}%

\theoremstyle{thmstyletwo}%
\newtheorem{remark}{Remark}%

\theoremstyle{thmstylethree}%

\usepackage{amsopn}

\date{\today}
\begin{document}
\title[Proximal Augmented Lagrangian Method for cMGGMs]{A Semismooth Newton-Based Proximal Augmented Lagrangian Method for Joint Estimation of Multiple Gaussian Graphical Models with Clustered Structure}
\author[1]{\fnm{Han} \sur{Wang}}\email{whmath@126.com}

\author[1]{\fnm{Yue} \sur{Liu}}\email{liuyue@fzu.edu.cn}

\author*[1,2]{\fnm{Yong-Jin} \sur{Liu}}\email{yjliu@fzu.edu.cn}

\affil[1]{\orgdiv{School of Mathematics and Statistics}, \orgname{Fuzhou University}, \orgaddress{\street{No.2 Wulongjiang North Avenue, Fuzhou}, \city{Fuzhou}, \postcode{350108}, \state{Fujian}, \country{P.R. China}}}

\affil[2]{\orgdiv{Center for Applied Mathematics of Fujian Province, School of Mathematics and Statistics}, \orgname{Fuzhou University}, \orgaddress{\street{No.2 Wulongjiang North Avenue}, \city{Fuzhou}, \postcode{350108}, \state{Fujian}, \country{P.R. China}}}

\abstract{
In this paper, we consider a class of convex composite  optimization problems arising from the joint estimation of clustered multiple Gaussian graphical models. The resulting  model   combines a log-determinant loss term with a nonsmooth sparse clustered regularizer,
which encourages both similar sparsity patterns and consistent edge values across different graphs. We first establish a necessary and sufficient condition under which the solution is block diagonal, enabling a large-scale problem to be decomposed into smaller independent subproblems and substantially reducing computational complexity.  To efficiently solve this problem, we develop a  proximal augmented Lagrangian method in which each subproblem is handled by a superlinearly convergent semismooth Newton method. Unlike  widely used first order methods, our approach fully exploits the underlying second order information through the semismooth Newton framework, thereby achieving faster convergence and improved robustness. The efficiency and robustness of the proposed algorithm are demonstrated through comparisons with state-of-the-art methods on both synthetic and real data sets.}
\keywords{
	Gaussian graphical model, clustered structure, screening rule, semismooth Newton method
}
\maketitle

\section{Introduction}

The Gaussian graphical model  (GGM) provides a widely used framework for learning conditional dependence structure from complex high-dimensional data \cite{chan2017gene,colclough2018multi,dondelinger2013non,foti2019statistical,friedman2000using,manning2018probabilistic}. In this framework, the entries of the precision matrix encode  partial correlations between the associated variables \cite{lauritzen1996graphical}. 
Consequently, the structure of Gaussian graph  can be inferred from the sparsity pattern of the precision matrix, which  is typically estimated by  thresholding  small (in absolute value) elements to reveal nonzero entries that represent conditional dependencies between variables  \cite{drton2017structure,drton2004model}.
Estimating sparse precision matrices is commonly formulated as a log-determinant optimization problem with nonsmooth regularization\cite{buhlmann2011statistics,yuan2007model}. Such formulations are statistically meaningful but computationally challenging, especially when the dimension is large and high-accuracy solutions are required.

In many applications,  data sets may exhibit both heterogeneity and homogeneity across  domains or classes.
Directly estimating a single GGM may fail to capture such structured variability, whereas fitting graphical models for distinct classes independently ignores the shared information among the dependency graphs.
The joint graphical model framework \cite{danaher2014joint}  addresses this issue by estimating multiple precision matrices simultaneously while encouraging shared and group-specific structures. 
This joint estimation idea has been further extended to accommodate more complex structural assumptions. For instance,  to handle cases where group labels are latent or unknown,  Hao et al. \cite{hao2018simultaneous} proposed   simultaneous clustering and estimation  procedure.
Bilgrau et al. \cite{bilgrau2020targeted} proposed targeted fused ridge estimator,  which incorporates target matrices as prior information through an $\ell_2$ penalty term to stabilize the estimation process.
These models have improved the recovery of related dependency networks, but they also lead to larger and more tightly coupled nonsmooth optimization problems.

Consider that the data for each group are $p$-variate and share the same set of nodes, but the underlying connection patterns  may be different due to the heterogeneity between groups. 
The data for the $k$-th group can be represented as an $n_k\times p$ matrix $\Delta^k=(\delta ^k_1, \dots, \delta^k_{n_k})^{\top}$, where $\delta^k_i=(\delta^k_{i,1},\dots,\delta^k_{i,p})^{\top}$, $i=1,\dots,n_k$, are $p$-dimensional vectors of observations. 
Suppose that the data in each group are independently drawn from different distributions $\N_p(\mu^k,\Sigma^k)$ for $k=1,\dots,K$. The sample mean and the sample covariance matrice for the $k$-th group  are then given by $\bar{\mu}^k=\frac{1}{n_k}\sum_{i=1}^{n_k}\delta_i^k $, $S^k=\frac{1}{n_k-1}\sum_{i=1}^{n_k}(\delta_i^k-\bar{\mu}^k)(\delta_i^k-\bar{\mu}^k)^{\top}$, $k=1,\dots,K$.
Given observations $\Delta=\{\Delta^1,\dots,\Delta^K\}$, the multiple Gaussian graphical models for estimating the precision matrices $(\Sigma^k)^{-1}$, $k=1,\dots,K$, jointly is the model with the variable $\Theta=(\Theta^1,\dots,\Theta^K)\in\mathbb{S}^p\times\dots\times\mathbb{S}^p$:
\begin{equation}\label{dg:2}
	\min_{\Theta} \:\:\:L(\Theta)+\P(\Theta), 
\end{equation}
where $L(\Theta)=\sum_{k=1}^{K} \left (h(\Theta^k)+\left \langle  S^k,\Theta^k\right \rangle \right )$, with $h(A) = -\log\det A$ for $A\in\mathbb{S}_{++}^p$, and $h(A)=+\infty$ otherwise; $\P(\cdot)$ is a penalty function designed to induce sparsity within each $\Theta^k$ and  capture shared structures among different $\Theta^k$.

In this paper, we focus on a class of joint estimation problems for the clustered multiple Gaussian graphical models (cMGGMs), with a penalty function proposed by Danaher et al. \cite{danaher2014joint}, defined as follows:
\begin{equation}\label{dg:12}
	\P(\Theta)=\mu_1\sum_{j\ne j^{'}}^{}\left( \sum_{k=1}^{K}\left | \Theta _{jj^{'}}^{k} \right |\right)+\mu_2\sum_{j\ne j^{'}}^{}\left( \sum_{k<k^{'}}^{}\left | \Theta _{jj^{'}}^{k}-\Theta _{jj^{'}}^{k^{'}} \right |\right).
\end{equation} 
For $1\le i,j\le p$, we define $\Theta_{[ij]}:=(\Theta_{ij}^1,\dots,\Theta_{ij}^K)^{\top}\in\mathbb{R}^K$ as the column vector obtained by collecting $(i,j)$-th elements from all $K$ matrices $\Theta^k$, $k=1,\dots,K$. It can be clearly seen that 
\begin{equation*} \P(\Theta)=\sum_{i\ne j}^{} \phi(\Theta_{[ij]})\:\: \text{with}\:\: \phi(x)=\mu_1\left \| x \right \|_1+\mu_2\sum_{i<j}^{}|x_i-x_j|, \:\:\:\forall x\in\mathbb{R}^K,
	\end{equation*}
where the function $\phi(\cdot)$ is actually a special sparse clustered Lasso regularizer. It imposes an $\ell_1$ penalty on all  off-diagonal elements of the $K$ precision matrices  and the differences between the corresponding elements of any pair of precision matrices. 
When the tuning parameter $\mu_2$ is sufficiently large, many elements with the same position across the estimated matrices $\Theta^k$, $k=1,\dots,K$ tend to become similar or even identical. Consequently, this clustered graphical Lasso regularizer promotes both sparsity patterns shared across matrices and consistency in the values of corresponding edges among different graphs.

 
Compared with learning a single graphical model, jointly estimating  multiple precision matrices is even more challenging.
The alternating direction method of multipliers (ADMM) \cite{glowinski1975approximation}, a general procedure for solving convex optimization problems, has been widely used for this class of problems.
For example, Danaher et al. \cite{danaher2014joint} applied ADMM to multiple graphical models with clustered and group structures, Gibberd et al. \cite{gibberd2017regularized} focused on multiple graphical models with a group-fused penalty, and Hallac et al. \cite{hallac2017network} employed ADMM for the estimation of time-varying graphical model.
ADMM is a practical first-order method that can efficiently obtain an approximate solution of low to moderate accuracy.
However, ADMM requires computing multiple eigenvalue decompositions at each iteration,  a computationally expensive operation. It hardly utilizes any second-order information, which is generally required to  achieve highly accurate solutions.
To overcome this limitation, several  algorithms have been proposed towards other forms of penalty functions.
For instance, Yang et al. \cite{yang2015fused} employed a proximal Newton-type method 
 \cite{hsieh2011sparse,lee2014proximal} to estimate  the fused graphical Lasso problem.
However, this second-order method in
 \cite{yang2015fused} lacks  guaranteed local linear convergence and incurs high computational cost due to the  quadratic subproblems.
Furthermore, Zhang et al. \cite{zhang2021efficient,zhang2020proximal} proposed a proximal point algorithm for the group and fused  graphical Lasso problems. Each inner subproblem  is solved by the dual-based semismooth Newton method, which effectively exploits second-order information and achieves an arbitrary linear convergence rate.

 
Motivated by the numerical efficiency of semismooth Newton method, we aim  to design an efficient  algorithm  for solving problem (\ref{dg:2}), with economical implementation and fast convergence rate. 
It should be noted that solving the cMGGMs is much more challenging than solving the clustered Lasso problem. 
Specifically, the main difficulties lie in the log-determinant function
and   matrix variables, which make the theoretical analysis and numerical implementation of the algorithms substantially more involved.
The main contributions of this paper can be summarized as follows.
\begin{itemize}
\item[1.]  We derive a surrogate generalized Jacobian of the proximal mapping of (\ref{dg:12}), which involves more complicated manipulations of coordinates for a collection of matrix variables, in contrast to the vector case encountered in the clustered Lasso problem \cite{lin2019efficient}. 
\item[2.] We establish the necessary and sufficient condition for the solution of the cMGGMs to be block diagonal. 
Based on this key property of  (\ref{dg:12}) and several tools from linear programming, we rigorously derive a screening rule that enables efficient estimation of large-scale multiple precision matrices for cMGGMs. The proposed screening rule can be incorporated into any existing algorithm to significantly reduce computational cost.
\item[3.] We design a semismooth Newton-based proximal augmented Lagrangian ({\sc Ssnpal}) method to solve the dual formulation of problem (\ref{dg:2}).
Specifically, we adopt the inexact proximal augmented Lagrangian framework,
which ensures that the unconstrained minimization subproblem at each iteration is strongly convex. Consequently, the semismooth Newton method we employ to solve this subproblem attains locally quadratic convergence.
It should be noted that, for computational efficiency, we warm-start the {\sc Ssnpal} using ADMM so that its fast local convergence can take effect at an early stage.
\end{itemize}

The remaining parts of this paper are as follows. Section \ref{pre} presents some definitions  and preliminary results, including the proximal mapping of (\ref{dg:12}), its generalized Jacobian, the proximal mapping of the log-determinant function, and its derivative.
In section \ref{screen}, we present a
screening rule for the solution of  cMGGMs to be block diagonal for an arbitrary number of graphs.
In section \ref{alg}, we develop the {\sc Ssnpal} method for solving cMGGMs and its convergence properties.  The numerical performance of our proposed algorithm on synthetic and real data sets is evaluated in section \ref{exp}. Section \ref{conclusion} gives the conclusion.

\textbf{Notation.}
Let $\mathbb{S}^p$ denote the space of $p \times p$ real symmetric matrices. We write $\mathbb{S}^p_+$ and $\mathbb{S}^p_{++}$ for the cones of positive semidefinite and positive definite matrices in $\mathbb{S}^p$, respectively.
For any $A$, $B\in\mathbb{S}^p$, we use $A \succeq B$ ( $A \succ B$) to indicate that $A-B \in \mathbb{S}^p_+$ ( $A-B \in \mathbb{S}^p_{++}$). In particular, $A \succeq 0$ ( $A \succ 0$) means that $A$ belongs to $\mathbb{S}^p_+$ ( $\mathbb{S}^p_{++}$).
We define  $\Y \; (\Y_{+}, \Y_{++})$  as   the Cartesian product of $K$ copies of $\mathbb{S}^p \;(\mathbb{S}^p_{+}, \mathbb{S}^p_{++})$.
Let $\mathbb{R}^n$ denote the $n$-dimensional Euclidean space and $\mathbb{R}^{m \times n}$ the set of all $m \times n$ real matrices.
For any matrix $A \in\mathbb{R}^{m\times n}$, $A_{ij}$ denotes the $(i, j)$-th element of $A$.
$\I_n$ represents the $n\times n$ identity matrix and $\I$ denotes the identity matrix or map when the dimension is clear from the context.
Similarly, we use $\mathbf{0}_n$ to denote the zero vector in $\mathbb{R}^n$ and simply write $\mathbf{0}$ when the dimension is clear from the context.
Let $\left \langle \cdot ,\cdot  \right \rangle $  denote the  standard inner product in the space of real matrices and   $\left \| \cdot  \right \| $  the induced   Frobenius norm.
For any $x \in \mathbb{R}^n$, $\left \| x \right \| _1:=\sum_{i=1}^{n} \left | x_i \right | $.
$\Diag(D_1, . . . , D_n)$ denotes the block diagonal matrix whose $i$-th diagonal block is the matrix $D_i$ for $i=1,\dots,n$.
The composition of two functions $f$ and $g$ is denoted by $f \circ g$, i.e., $(f \circ g)(\cdot) = f(g(\cdot))$. The Hadamard  product is written as $\odot$.
For two sequences of numbers $\{ a_n\}$ and $\{ b_n\}$ , we say $a_n=\O (b_n)$ if there exists a  constant $c>0$ such that $| a_n| \leq c| b_n|$ for sufficiently large $n$.

\section{Preliminaries}\label{pre}
In this section, we introduce the definition and some basic properties of the Moreau envelope of a proper, closed convex function, which serve as a foundation for the subsequent theoretical analysis and algorithmic design.
Let $\mathcal{E}$ be a finite-dimensional real Hilbert space and $f: \mathcal{E} \rightarrow \mathbb{R} \cup \{ +\infty \} $ be a given  proper and closed convex function. 
The Moreau envelope  \cite{moreau1965proximite,yosida2012functional} of $f$ is defined by
\begin{equation}\label{eq:11}
	\Psi _{f}(x):=\min_{y\in \mathcal{E}}\left \{ f(y)+\frac{1}{2}\left \| y-x \right \| ^2  \right \},\ \forall x\in \mathcal{E},
\end{equation}
and the proximal mapping associated with $f$,  the unique minimizer of (\ref{eq:11}),  is given by
\begin{equation*}\label{eq:12}
	\prox_{f}(x):=\underset{y\in \mathcal{E}}{\arg\min}\left \{ f(y)+\frac{1}{2}\left \| y-x \right \| ^2  \right \}, \ \forall x\in \mathcal{E}.
\end{equation*} 
It is known that the Moreau envelope of any proper and closed convex function  is  continuously differentiable \cite{lemarechal1997practical,Rockafellar1998VariationalA} with the gradient
\begin{equation*}
	\nabla \Psi _{f}(x)=x-\prox_{f}(x),\:\:\:\forall x\in \mathcal{E}.
\end{equation*}
\subsection{Properties of the  regularizer in (\ref{dg:12})}
In this subsection, we investigate the proximal mapping associated with (\ref{dg:12}) and its generalized Jacobian.
By definition, the proximal mapping of $\P(\cdot)$ is given as follows: for any $X\in\Y$,
\begin{equation}\label{dg:1}
	\begin{split}
		\prox_{\P}(X)&=\underset{\Theta\in\Y}{\arg\min}\left\{\P(\Theta)+\frac{1}{2}\left \| \Theta-X \right \|^2 \right\}\\[2mm]
		&=\underset{\Theta\in\Y}{\arg\min}\left\{\sum_{i\ne j}^{} \left\{\phi(\Theta_{[ij]})+\frac{1}{2}\left \| \Theta_{[ij]}-X_{[ij]} \right \|^2 \right\}+\frac{1}{2}\sum_{i=1}^{p}\left \| \Theta_{[ii]}-X_{[ii]} \right \|^2\right\}.
	\end{split}
\end{equation}
It can be readily observed that the problem (\ref{dg:1}) is block separable with respect to each vector $\Theta_{[ij]}\in\mathbb{R}^K$. Therefore, for any $i,j=1,\dots,p$, the vector $(\prox_{\P}(X))_{[ij]}$, consisting of all entries of $(\prox_{\P}(X))$ in the $(i,j)$-th position, admits the following explicit form
\begin{equation*}
	(\prox_{\P}(X))_{[ij]}=\begin{cases}
		\prox_{\phi}(X_{[ij]}),& \text{ if } i\ne j, \\[2mm]
		X_{[ii]},& \text{ if } i=j.
	\end{cases}
\end{equation*}
Based on the above analysis, the surrogate generalized Jacobian of $\prox_{\P}(\cdot)$ at $X$, denoted by $\hat{\partial}\prox_{\P}(X):\Y\rightrightarrows\Y$, can be expressed as: for any $Y\in\Y$,
	\begin{equation}\label{dg:26}
			\begin{aligned}
				\hat{\partial}\prox_{\P}(X)[Y]
				= \Big\{
				\W[Y] \,\Bigm|\,
				&(\W[Y])_{[ij]} =
				\begin{cases}
					M^{(ij)} Y_{[ij]}, & i<j,\\[2mm]
					Y_{[ii]}, & i=j,\\[2mm]
					M^{(ji)} Y_{[ji]}, & i>j,
				\end{cases}
				 \\[2mm]
				&   M^{(ij)} \in \hat{\partial}\prox_{\phi}(X_{[ij]}), \:i,j=1,\dots,p  
				\Big\}.
			\end{aligned}
		\end{equation}
The  above results show  that computing $\prox_{\P}(\cdot)$ and its generalized Jacobian reduces to performing $p(p-1)/2$ independent computations of $\prox_{\phi}(\cdot)$ and its generalized Jacobian, which can be executed in parallel. The explicit expressions of the proximal mapping $\prox_{\phi}(\cdot)$ and its generalized Jacobian $\hat{\partial}\prox_{\phi}(\cdot)$ have been derived in \cite{lin2019efficient}. For completeness, we briefly recall the main results concerning the computation of $\prox_{\phi}$ and $\hat{\partial}\prox_{\phi}(\cdot)$, as presented in  \cite[Proposition 2.4, Proposition 2.9]{lin2019efficient}.
\begin{proposition}
	Let $y\in\mathbb{R}^n$ be given. Then there exists a permutation matrix $P_y\in \mathbb{R}^{n\times n}$ such that $\tilde{y}=P_yy$ satisfies $\tilde{y}_1\ge\tilde{y}_2\ge\dots\ge\tilde{y}_n$. It follows that 
\[ \prox_{\phi}(y)=\prox_{\mu_1 \left\|\cdot\right\|_1}(G_{\mu_2}(y))=\sign(G_{\mu_2}(y))\circ\max(|G_{\mu_2}(y)|-\mu_1,0), \] 
where $G_{\mu_2}(y)=P_y^{\top}\Pi_{F}(P_yy-\mu_2 \omega)$, $\omega\in\mathbb{R}^n$ is the vector with the entries $\omega_k=n-2k+1$, $k=1,\dots,n$, and $F=\left \{ x\in \mathbb{R}^{n}|Ex\ge 0  \right \}$  with $Ex=(x_1-x_2,\dots,x_{n-1}-x_n)^{\top}\in \mathbb{R}^{n-1}$.
\end{proposition}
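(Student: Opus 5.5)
Here $\phi(x)=\mu_1\|x\|_1+\mu_2\sum_{i<j}|x_i-x_j|$ on $\mathbb{R}^n$. The proof splits into two parts: first $\prox_\phi=\prox_{\mu_1\|\cdot\|_1}\circ\prox_{\mu_2\psi}$ with $\psi(x)=\sum_{i<j}|x_i-x_j|$, and then the explicit formula $\prox_{\mu_2\psi}(y)=G_{\mu_2}(y)$. The soft-thresholding expression for $\prox_{\mu_1\|\cdot\|_1}$ is standard.

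\textbf{Step 1: order preservation of $\prox_{\mu_2\psi}$.} I would show that if $y_i\ge y_j$ then $x=\prox_{\mu_2\psi}(y)$ satisfies $x_i\ge x_j$. The argument is by exchange. Since $\psi$ is permutation invariant, swapping $x_i$ and $x_j$ leaves $\psi(x)$ unchanged. It changes $\frac12\|x-y\|^2$ by $(x_i-x_j)(y_i-y_j)$ in the appropriate sign. So if the order were reversed, the swapped point would give an objective value no larger, and uniqueness of the minimizer handles the tie cases. Consequently $P_y x\in F$, and on $F$ we have $\psi(P_y^{\top}z)=\sum_{k<l}(z_k-z_l)=\langle \omega,z\rangle$ with $\omega_k=n-2k+1$.

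From this, $z=P_yx$ minimizes $\mu_2\langle\omega,z\rangle+\frac12\|z-P_yy\|^2$ over $F$. The reason is that $\psi(P_y^{\top}z)\ge\langle\omega,z\rangle$ for every $z$, since $\psi$ is a sum of absolute values and $\langle\omega,z\rangle$ is the same sum without them, with equality on $F$. Hence the restricted minimizer coincides with the global one. Completing the square, $z=\Pi_F(P_yy-\mu_2\omega)$, which gives $\prox_{\mu_2\psi}(y)=G_{\mu_2}(y)$.

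\textbf{Step 2: the composition.} I would invoke the known criterion (Yu 2013) that $\prox_{f+g}=\prox_f\circ\prox_g$ whenever $\partial g(x)\subseteq\partial g(\prox_f(x))$ for all $x$. The cleaner choice is $f=\mu_1\|\cdot\|_1$ and $g=\mu_2\psi$, and then to verify directly the optimality condition
\[ y-\prox_{\mu_1\|\cdot\|_1}(u)\in \mu_1\partial\|\cdot\|_1(\prox_{\mu_1\|\cdot\|_1}(u))+\mu_2\partial\psi(\prox_{\mu_1\|\cdot\|_1}(u)), \qquad u=G_{\mu_2}(y). \]
We have $y-u\in\mu_2\partial\psi(u)$ and $u-w\in\mu_1\partial\|\cdot\|_1(w)$ with $w=\prox_{\mu_1\|\cdot\|_1}(u)$, so it suffices to show $\partial\psi(u)\subseteq\partial\psi(w)$. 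Soft thresholding is monotone and only merges coordinates: $u_i>u_j$ implies $w_i\ge w_j$, and $u_i=u_j$ implies $w_i=w_j$. Each term $|x_i-x_j|$ has subdifferential $\{\sign(x_i-x_j)(e_i-e_j)\}$ when $x_i\ne x_j$, and $[-1,1](e_i-e_j)$ when they are equal. So each term's subdifferential at $u$ is contained in the one at $w$, and summing gives the inclusion.

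\textbf{Main obstacle.} I expect Step 1 to be the main obstacle, specifically handling ties in $y$ and justifying that the minimization may be restricted to $F$. Strong convexity together with the lower bound $\psi(P_y^{\top}z)\ge\langle\omega,z\rangle$ should make both points clean.
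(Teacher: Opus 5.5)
Your proof is correct and follows the same route as the source the paper relies on here; the paper gives no proof of its own and imports the result from Lin et al.\ \cite{lin2019efficient}. As there, you split $\prox_\phi=\prox_{\mu_1\|\cdot\|_1}\circ\prox_{\mu_2\psi}$ via the inclusion $\partial\psi(u)\subseteq\partial\psi(\prox_{\mu_1\|\cdot\|_1}(u))$, use the swap argument to show that $\prox_{\mu_2\psi}$ preserves the ordering of $y$, and reduce to the projection $\Pi_F(P_yy-\mu_2\omega)$ onto the monotone cone. One cosmetic point: the global bound $\psi(z)\ge\langle\omega,z\rangle$ is not needed, since $P_yx\in F$, agreement of the two objectives on $F$, and uniqueness of the projection already give $P_yx=\Pi_F(P_yy-\mu_2\omega)$.
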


\begin{proposition}
	For any $y\in\mathbb{R}^n$, the generalized Jacobian of $\prox_{\phi}(\cdot)$ at $y$ is given by
	\begin{equation*}
		\hat{\partial}\prox_{\phi}(y)= \left \{ H\in \mathbb{S}^n\:|\:H=RN, R\in \partial_B\prox_{\mu_1\left \| \cdot  \right \|_1 } (G_{\mu _2}(y)),N\in \J_{G_{\mu2}}(y)\right \},	
	\end{equation*}
	where $\partial_B\prox_{\mu_1\left \| \cdot  \right \|_1 }(\cdot)$ denotes  the B-subdifferential of $\prox_{\mu_1\left \| \cdot  \right \|_1 }(\cdot) $, $\J_{G_{\mu2}}(\cdot)$ denotes the generalized Jacobian of $G_{\mu_2}(\cdot)$ \cite{lin2019efficient}.
\end{proposition}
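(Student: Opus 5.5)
The plan is to treat $\hat{\partial}\prox_{\phi}(y)$ through the chain rule applied to the composite representation $\prox_{\phi}=\prox_{\mu_1\|\cdot\|_1}\circ G_{\mu_2}$ from the preceding proposition. By that proposition, for every $y$ we have $\prox_{\phi}(y)=\prox_{\mu_1\|\cdot\|_1}(G_{\mu_2}(y))$. Both maps are piecewise affine and Lipschitz. $\prox_{\mu_1\|\cdot\|_1}$ is componentwise soft thresholding. $G_{\mu_2}$ is a composition of a permutation, a translation by $-\mu_2\omega$, and the projection onto the polyhedral cone $F$; the permutation is locally constant on the open set where the entries of $y$ are distinct. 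So the natural candidate for a surrogate generalized Jacobian is the set of products $RN$, with $R$ in the B-subdifferential of the outer map at $G_{\mu_2}(y)$ and $N$ in a generalized Jacobian of the inner map at $y$.

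First I will make $\J_{G_{\mu_2}}(y)$ explicit. At points with distinct entries, $P_y$ is locally constant. Hence $G_{\mu_2}$ is locally $P_y^{\top}\Pi_F(P_y\cdot-\mu_2\omega)$, and its Jacobian set is $P_y^{\top}\,\partial\Pi_F(P_yy-\mu_2\omega)\,P_y$. Since $F$ is polyhedral, $\Pi_F$ is piecewise affine. Its generalized Jacobian consists of the projectors onto subspaces $\{x: E_Ix=0\}$, where $I$ ranges over active sets of the isotonic projection. Concretely, these are block-diagonal averaging matrices, with blocks corresponding to groups of consecutive coordinates pooled by the projection. Following \cite{lin2019efficient}, I would define $\J_{G_{\mu_2}}(y)$ for general $y$ as this family, allowing every permutation $P_y$ that sorts $y$ when ties occur.

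Second, I will verify that $\hat\partial\prox_\phi$ is an admissible surrogate. This needs two properties. (a) It is nonempty, compact valued and upper semicontinuous. This follows because the family consists of finitely many matrices, drawn from finitely many permutations, active sets and sign patterns, and the index sets can only shrink under perturbation. (b) $\prox_\phi$ is strongly semismooth with respect to it: $\prox_\phi(y+h)-\prox_\phi(y)-Hh=0$ for all $H\in\hat\partial\prox_\phi(y+h)$ and small $h$. Property (b) I expect to prove from piecewise affinity. Near $y$, each piece of the composite map is affine. Any choice of $R$ and $N$ at $y+h$ matches a linear piece that is valid on a segment from $y$, because the active sets and sign patterns at $y+h$ are consistent with those at $y$ by continuity. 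Symmetry of $H$ (membership in $\mathbb{S}^n$) follows since $N$ is a symmetric orthogonal projector and $R$ is a diagonal 0/1 matrix. The soft-thresholding step never splits a pooled block, because pooled entries share the same value. So $R$ is constant on each block, and therefore it commutes with $N$.

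The main obstacle is the tie case in step two. There $P_y$ is not unique, and one must show that every admissible combination of permutation, active set and sign pattern at $y+h$ yields a linear piece agreeing with $\prox_\phi$. I would handle this by noting that the preceding proposition's formula is independent of the choice of sorting permutation. Coordinates tied in $y$ are pooled together by $\Pi_F$ after the $\omega$-shift whenever they remain within the same block, so the resulting averaging operator is permutation invariant on those coordinates. For this part I would rely on \cite[Proposition 2.9]{lin2019efficient} rather than redo the argument.
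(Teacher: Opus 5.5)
Your overall route matches the paper's, which gives no argument of its own for this proposition. It imports the set from \cite[Proposition 2.9]{lin2019efficient} and justifies it through \cite[Theorem 2.10]{lin2019efficient}, and your steps (a)--(b) sketch exactly that justification. One step, however, is false as written: the claim that every product $RN$ is symmetric because ``$R$ is constant on each block.'' Equal pooled values force equal values of $\prox_{\mu_1\|\cdot\|_1}$, but not equal elements of its B-subdifferential. If the common pooled value has absolute value exactly $\mu_1$, separability means $\partial_B\prox_{\mu_1\|\cdot\|_1}(G_{\mu_2}(y))$ contains every independent $0/1$ diagonal choice on those coordinates. Concretely, take $n=2$ and $y=(\mu_1+\epsilon,\mu_1-\epsilon)^{\top}$ with $0<\epsilon<\mu_2$. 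Then:
\begin{itemize}
\item $P_yy-\mu_2\omega=(\mu_1+\epsilon-\mu_2,\ \mu_1-\epsilon+\mu_2)^{\top}$ violates the ordering, so it is pooled and $G_{\mu_2}(y)=(\mu_1,\mu_1)^{\top}$;
\item the multiplier is $\mu_2-\epsilon>0$, which forces $N=\frac12\begin{pmatrix}1&1\\1&1\end{pmatrix}$;
\item $R=\Diag(1,0)$ lies in the B-subdifferential, yet $RN=\frac12\begin{pmatrix}1&1\\0&0\end{pmatrix}\notin\mathbb{S}^2$.
\end{itemize}
Since $(RN)_{ij}=r_iN_{ij}$ and $N$ is a block-averaging projector, $RN\in\mathbb{S}^n$ holds if and only if $R$ is constant on the blocks of $N$. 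So the restriction $H\in\mathbb{S}^n$ in the definition is not redundant; it is precisely what discards these spurious products.

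The repair does not change your plan; you only need to drop ``symmetry is automatic'' and argue nonemptiness of the restricted set instead.
\begin{itemize}
\item \emph{Nonemptiness:} within a block of $N$ the values $|G_{\mu_2}(y)_i|$ coincide, so a block-constant $R$ always exists in $\partial_B\prox_{\mu_1\|\cdot\|_1}(G_{\mu_2}(y))$.
\item \emph{Positive semidefiniteness:} for such $R$ one has $RN=NR=RNR\succeq 0$.
\item \emph{Upper semicontinuity:} it survives intersection with the closed set $\mathbb{S}^n$.
\item \emph{Semismoothness:} the identity holds for \emph{all} products, symmetric or not. For $y'$ near $y$, put $u=G_{\mu_2}(y)$ and $u'=G_{\mu_2}(y')$. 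You have $u'-u=N(y'-y)$ for every $N\in\J_{G_{\mu_2}}(y')$. The piecewise affine map $\prox_{\mu_1\|\cdot\|_1}$ satisfies $\prox_{\mu_1\|\cdot\|_1}(u')-\prox_{\mu_1\|\cdot\|_1}(u)=R(u'-u)$ for every $R\in\partial_B\prox_{\mu_1\|\cdot\|_1}(u')$ once $u'$ is close to $u$.
\end{itemize}

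Your tie-case observation is correct, and you can make it self-contained rather than citing it. For $z=P_yy-\mu_2\omega$, adjacent tied entries of $y$ give $z_{k+1}=z_k+2\mu_2$. This forces a positive multiplier on that constraint, so tied entries lie in one block of every element, and $\J_{G_{\mu_2}}(y)$ does not depend on which sorting permutation you choose. Moreover, any permutation sorting a nearby $y'$ also sorts $y$. The identity $G_{\mu_2}(y')-G_{\mu_2}(y)=N(y'-y)$ therefore reduces to the corresponding property of $\Pi_F$ at finitely many base points.
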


Based on \cite[Theorem 2.10]{lin2019efficient}, the following result clarifies the rationale for treating $\hat{\partial}\prox_{\P}(X)$ in (\ref{dg:26}) as the surrogate  generalized Jacobian of $\prox_{\P}(\cdot)$ at $X$. The proof follows directly from the cited theorem and is therefore omitted.
\begin{theorem}
	The multifunction $\hat{\partial}\prox_{\P}(X)$ defined in (\ref{dg:26}) is  nonempty, compact, and upper semicontinuous. For any $X \in \Y$, any element in the set $\hat{\partial}\prox_{\P}(X)$ is both self-adjoint and positive semidefinite. Furthermore, there exists a neighborhood $\N (X)$ of $X$ such that for all $Y \in \N (X)$, 
     	\begin{equation*}
     		\prox_{\P}(Y)-\prox_{\P}(X)-\W[Y-X]=0,\:\: \forall \W \in \hat{\partial}\prox_{\P}(X).
     	\end{equation*}
\end{theorem}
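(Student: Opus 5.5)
The plan is to lift the three properties in \cite[Theorem 2.10]{lin2019efficient} for the vector map $\prox_{\phi}$ to $\prox_{\P}$. This works because, by (\ref{dg:1}) and (\ref{dg:26}), both $\prox_{\P}$ and $\hat{\partial}\prox_{\P}$ split coordinatewise over the index pairs. Concretely, I would identify $\Y$ isometrically, up to the factor $\sqrt2$ on off-diagonal pairs, with the product space
\[
\prod_{i<j}\mathbb{R}^K\times\prod_{i=1}^p\mathbb{R}^K
\]
via $X\mapsto(X_{[ij]})_{i\le j}$. Symmetry of each $X^k$ gives $X_{[ij]}=X_{[ji]}$. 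Under this identification, $\prox_{\P}$ becomes the product map $(\prox_{\phi},\dots,\prox_{\phi},\I,\dots,\I)$, and each $\W\in\hat{\partial}\prox_{\P}(X)$ becomes the block-diagonal operator $\Diag(M^{(ij)}\ (i<j),\ \I_K\ (i=1,\dots,p))$. The same $M^{(ij)}$ is used for the $(i,j)$ and $(j,i)$ positions, so $\W[Y]$ is indeed symmetric.

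\textbf{Set-valued properties.} Nonemptiness and compactness follow because $\hat{\partial}\prox_{\P}(X)$ is the image of the finite product $\prod_{i<j}\hat{\partial}\prox_{\phi}(X_{[ij]})$ under the continuous, linear assembly map $(M^{(ij)})\mapsto\W$. Each factor is nonempty and compact by \cite[Theorem 2.10]{lin2019efficient}. Upper semicontinuity follows the same way. A finite product of upper semicontinuous compact-valued multifunctions is upper semicontinuous, and composing with the continuous maps $X\mapsto X_{[ij]}$ and with the linear assembly map preserves this.

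\textbf{Self-adjointness and positive semidefiniteness.} For $Y,Z\in\Y$, I would expand
\[
\langle\W[Y],Z\rangle=\sum_{i=1}^p\langle Y_{[ii]},Z_{[ii]}\rangle+2\sum_{i<j}\langle M^{(ij)}Y_{[ij]},Z_{[ij]}\rangle .
\]
Each $M^{(ij)}$ is symmetric and positive semidefinite by the cited theorem, so this expression is symmetric in $Y$ and $Z$, and taking $Z=Y$ gives a nonnegative value.

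\textbf{Local exactness.} The cited theorem gives, for each pair $i<j$, a neighborhood $\N_{ij}$ of $X_{[ij]}$ in $\mathbb{R}^K$ such that
\[
\prox_{\phi}(y)-\prox_{\phi}(X_{[ij]})-M(y-X_{[ij]})=0
\]
for all $y\in\N_{ij}$ and all $M\in\hat{\partial}\prox_{\phi}(X_{[ij]})$. Set
\[
\N(X)=\{Y\in\Y : Y_{[ij]}\in\N_{ij}\ \text{for all } i<j\}.
\]
This is a finite intersection of preimages of open sets under continuous maps, so it is an open neighborhood of $X$. Then, blockwise:
\begin{itemize}
\item for $i<j$ the residual vanishes by the choice of $\N_{ij}$;
\item for $i>j$ it is the same vector as for $(j,i)$;
\item for $i=j$ the residual is $Y_{[ii]}-X_{[ii]}-(Y_{[ii]}-X_{[ii]})=0$.
\end{itemize}

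The only delicate point is consistent bookkeeping of the symmetric index pairs $(i,j)$ and $(j,i)$. This ensures $\W$ maps $\Y$ into $\Y$ and is self-adjoint with respect to the Frobenius inner product. The rest transfers directly from \cite[Theorem 2.10]{lin2019efficient}.
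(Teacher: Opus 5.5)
Your strategy is the one the paper intends: its proof is omitted as following directly from \cite[Theorem 2.10]{lin2019efficient}. Your treatment of nonemptiness, compactness, upper semicontinuity, self-adjointness and positive semidefiniteness is correct, including the $(i,j)$/$(j,i)$ bookkeeping.

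The local-exactness step, however, rests on a false claim. You attribute to the cited theorem the identity $\prox_{\phi}(y)-\prox_{\phi}(x)-M(y-x)=0$ for all $y$ near $x$ and all $M\in\hat{\partial}\prox_{\phi}(x)$, i.e.\ with the Jacobian frozen at the base point. This fails at kinks. Take $x=\mu_1\mathbf{1}_K$:
\begin{itemize}
\item $P_xx-\mu_2\omega$ is nondecreasing with mean $\mu_1$, so its projection onto $F$ is $\mu_1\mathbf{1}_K$. Hence $G_{\mu_2}(x)=\mu_1\mathbf{1}_K$ and $\prox_{\phi}(x)=\mathbf{0}$.
\item Every coordinate of $G_{\mu_2}(x)$ sits at the kink of soft thresholding. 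Approaching from $(\mu_1-s)\mathbf{1}_K$ gives $R=0\in\partial_B\prox_{\mu_1\|\cdot\|_1}(G_{\mu_2}(x))$, and therefore $0\in\hat{\partial}\prox_{\phi}(x)$.
\item For $t>0$ we have $\mu_1\mathbf{1}_K\in\partial\phi(t\mathbf{1}_K)$, so $\prox_{\phi}((\mu_1+t)\mathbf{1}_K)=t\mathbf{1}_K$.
\item The residual with $M=0$ is then $t\mathbf{1}_K\neq\mathbf{0}$ for every small $t>0$.
\end{itemize}
Placing $x$ in the $(1,2)$ and $(2,1)$ positions of $X$ shows that the last assertion of the theorem, read literally with $\W\in\hat{\partial}\prox_{\P}(X)$, is false. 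No argument can establish it as written.

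The correct property, and the one the semismooth Newton analysis needs, takes the Jacobian at the perturbed point. As far as I recall, this is the form \cite[Theorem 2.10]{lin2019efficient} actually states. It reads: there is a neighborhood $\N_{ij}$ of $X_{[ij]}$ such that $\prox_{\phi}(y)-\prox_{\phi}(X_{[ij]})-M(y-X_{[ij]})=0$ for all $y\in\N_{ij}$ and all $M\in\hat{\partial}\prox_{\phi}(y)$.

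With this, your construction of $\N(X)$ goes through unchanged and proves the corrected statement: $\prox_{\P}(Y)-\prox_{\P}(X)-\W[Y-X]=0$ for all $Y\in\N(X)$ and all $\W\in\hat{\partial}\prox_{\P}(Y)$. Any such $\W$ is assembled from blocks $M^{(ij)}\in\hat{\partial}\prox_{\phi}(Y_{[ij]})$ with $Y_{[ij]}\in\N_{ij}$. So each off-diagonal block of the residual vanishes, the $(j,i)$ blocks coincide with the $(i,j)$ ones, and the diagonal blocks vanish trivially.

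You should therefore replace $\hat{\partial}\prox_{\P}(X)$ by $\hat{\partial}\prox_{\P}(Y)$ in the final identity rather than try to prove it as stated. The same correction is needed in the paper's own statement.
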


\subsection{Properties of the log-determinant function}
In this subsection, we present several properties of the proximal mapping associated with the following log-determinant function  and its derivative,  which are drawn mainly  from  \cite{wang2010solving,yang2013proximal}. 
To describe the properties of $\prox_h(\cdot)$, we introduce two scalar functions associated with a given $\beta>0$:
\begin{equation*} 
\phi_{\beta}^+(d):=(\sqrt{d^2+4\beta}+d)/2,\:\:\:\phi_{\beta}^-(d):=(\sqrt{d^2+4\beta}-d)/2,\:\:\forall d\in\mathbb{R}.
\end{equation*}
Moreover, for any symmetric matrix $A \in \mathbb{S}^p$ with eigenvalue decomposition $A = Q \allowbreak \Diag(d_1,\allowbreak \dots,\allowbreak d_p) Q^\top$, the matrix counterparts of these two scalar functions are defined by
\begin{equation}\label{dg:27} \phi_{\beta}^+(A):=Q\Diag(\phi_{\beta}^+(d_1),\dots,\phi_{\beta}^+(d_p))Q^{\top},\:\:\:\phi_{\beta}^-(A):=Q\Diag(\phi_{\beta}^-(d_1),\dots,\phi_{\beta}^-(d_p))Q^{\top}. 
\end{equation}
It can be readily verified that the functions $\phi_{\beta}^+(\cdot)$ and $\phi_{\beta}^-(\cdot)$ are well-defined and yield positive definite matrices for any $A \in \mathbb{S}^p$. Based on these functions, the subsequent proposition provides explicit expressions for the proximal mapping 
 \cite[Proposition 2.3]{yang2013proximal} of the log-determinant function $h(\cdot)$ and  its  derivative \cite[Lemma 2.1(b)]{wang2010solving}.
\begin{proposition}
	Let $\beta >0$ be a given scalar and $A \in \mathbb{S}^p$. Define the functions $\phi_\beta^+(A)$ and $\phi_\beta^-(A)$ as in (\ref{dg:27}). Then the following statements hold:
	\begin{itemize}
		\item [(i)] The proximal mapping and Moreau envelope of $h(\cdot)$ at $A$ with parameter $\beta$ are given by $\:
			\prox_{\beta h}(A)\allowbreak	  =\phi_{\beta}^+(A),
				\Psi_{\beta h}(A) =-\beta\log\det(\phi_{\beta}^+(A))	+\frac{1}{2}\left \|\phi_{\beta}^-(A)\right \|^2.$
		\item [(ii)] The mapping $\phi_\beta^+: \mathbb{S}^p \to \mathbb{S}^p$ is continuously differentiable. Moreover, if $A$ has the eigenvalue decomposition $A = Q \Diag(d_1, \dots, d_p) Q^\top$, then the directional derivative of $\phi_\beta^+(\cdot)$ at $A$ along any $B \in \mathbb{S}^p$ is given by
		\begin{equation*} (\phi_{\beta}^+)'(A)[B]=Q(\Gamma \odot (Q^{\top}BQ))Q^{\top},
			\end{equation*}
		where the entries of 
  $\Gamma\in\mathbb{S}^p$ are
		\begin{equation*} \Gamma_{ij}=\frac{\phi_{\beta}^+(d_i)+\phi_{\beta}^+(d_j)}{(d_i^2+4\beta)^{1/2}+(d_j^2+4\beta)^{1/2}},\:\:i,j=1,\dots,p.
			\end{equation*}
	\end{itemize}
\end{proposition}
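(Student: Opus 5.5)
Both parts reduce to scalar computations in the eigenbasis of $A$, so the plan is to diagonalize first and then use standard spectral-function results.

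\textbf{Part (i).} Write $A=Q\Diag(d_1,\dots,d_p)Q^\top$ and consider $\min_{Y\succ 0}\{-\beta\log\det Y+\frac12\|Y-A\|^2\}$. The objective is strictly convex, and it tends to $+\infty$ as $Y$ approaches the boundary of $\mathbb{S}^p_{++}$ or grows unboundedly. Hence there is a unique minimizer, and it is characterized by the first-order condition $-\beta Y^{-1}+Y-A=0$, that is, $Y^2-AY-\beta\I=0$. We verify this equation directly for the candidate $Y=\phi_\beta^+(A)=Q\Diag(\phi_\beta^+(d_i))Q^\top$. Since $Y$ and $A$ share the eigenbasis $Q$, the equation reduces to the scalar identity $t^2-d t-\beta=0$. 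Its positive root is $t=\phi_\beta^+(d)=(d+\sqrt{d^2+4\beta})/2$. Thus $\prox_{\beta h}(A)=\phi_\beta^+(A)$.

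For the Moreau envelope, substitute this minimizer into the objective. Using $\phi_\beta^+(d)-d=\phi_\beta^-(d)$, we get $\phi_\beta^+(A)-A=\phi_\beta^-(A)$. Therefore
\[
\Psi_{\beta h}(A)=-\beta\log\det\phi_\beta^+(A)+\tfrac12\|\phi_\beta^-(A)\|^2 .
\]

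\textbf{Part (ii).} The scalar function $\phi_\beta^+$ is $C^\infty$ on $\mathbb{R}$. By L\"owner's theory of spectral (matrix) functions, the map $A\mapsto\phi_\beta^+(A)$ is therefore continuously differentiable. Its derivative is $Q(\Gamma^{0}\odot(Q^\top BQ))Q^\top$, where $\Gamma^0$ is the first divided-difference matrix:
\[
\Gamma^0_{ij}=\frac{\phi_\beta^+(d_i)-\phi_\beta^+(d_j)}{d_i-d_j}\ \text{ if } d_i\neq d_j, \qquad \Gamma^0_{ij}=(\phi_\beta^+)'(d_i)\ \text{ if } d_i=d_j.
\]
It remains to show that $\Gamma^0$ equals the stated $\Gamma$. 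Set $s_i=\sqrt{d_i^2+4\beta}$. For $d_i\neq d_j$,
\[
\phi_\beta^+(d_i)-\phi_\beta^+(d_j)=\tfrac12\bigl[(s_i-s_j)+(d_i-d_j)\bigr], \qquad s_i-s_j=\frac{(d_i-d_j)(d_i+d_j)}{s_i+s_j}.
\]
Dividing by $d_i-d_j$ gives
\[
\Gamma^0_{ij}=\tfrac12\Bigl(\frac{d_i+d_j}{s_i+s_j}+1\Bigr)=\frac{(s_i+d_i)+(s_j+d_j)}{2(s_i+s_j)}=\frac{\phi_\beta^+(d_i)+\phi_\beta^+(d_j)}{s_i+s_j}.
\]
When $d_i=d_j$, this expression becomes $\phi_\beta^+(d_i)/s_i$. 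This equals $(\phi_\beta^+)'(d_i)=\tfrac12(1+d_i/s_i)$, so the single formula covers both cases.

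The main obstacle is the differentiability of the spectral function in (ii) when eigenvalues repeat. I would handle this by citing the standard L\"owner / Daleckii–Krein result rather than reproving it. The remaining steps are the algebra shown above.
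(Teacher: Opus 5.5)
Your proof is correct and follows the standard argument. The paper gives no proof of its own: it cites Proposition 2.3 of \cite{yang2013proximal} for (i) and Lemma 2.1(b) of \cite{wang2010solving} for (ii). Those results are obtained as you do, by solving the scalar equation $t^2-dt-\beta=0$ in the eigenbasis of $A$ and applying the L\"owner (Daleckii--Krein) divided-difference formula, simplified via $s_i-s_j=(d_i^2-d_j^2)/(s_i+s_j)$; your version just makes the cited facts self-contained, including the check that one formula covers $d_i=d_j$.
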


\section{The screening rule for the cMGGMs}\label{screen}

In this section, we present a theorem that provides substantial computational improvements to the {\sc Ssnpal} algorithm discussed in Section \ref{alg}. 
This theorem only needs to be examined for the empirical covariance matrices $S^1,\dots, S^K$,  and  the given parameters. According to this screening strategy, we can simply perform the {\sc Ssnpal} algorithm on the features within each block separately, to obtain exactly the same solution that would have been obtained  by  applying the algorithm to all $p$ features. It results in huge speed improvements, as it eliminates the need  to compute the eigenvalue decomposition of a $p\times p$ matrix.

Suppose that  $p$ features are divided into  $T$ non-overlapping sets $C_1,\dots,C_T$, with $C_t\cap C_{t'}=\emptyset $ for all $t\ne t'$ and $ \bigcup_{t=1}^{T} C_t=\left \{ 1,\dots,p \right \} $. We call the solution $\hat{\Theta}$ of the problem (\ref{dg:2}) a block diagonal form and contains $T$ known blocks, each of which is composed of features from the sets $C_t$, $t=1,\dots,T$, if there exists a permutation matrix $U\in\mathbb{R}^{p\times p}$ such that each estimated precision matrix takes the form of 
\begin{equation}\label{dg:13}
	\hat{\Theta}^{k}=U\begin{pmatrix}
		\hat{\Theta}^{k}_1&  & \\
		& \ddots  & \\
		&  &\hat{\Theta}^{k}_T
	\end{pmatrix} U^{\top },\:\: k=1,\dots,K.
\end{equation}
For   convenience in the subsequent analysis, we assume  $U=\I$ in this paper.

The following decomposition result for problem (\ref{dg:2}) is trivial and we omit the proof.
\begin{theorem}\label{Th:1}
	Suppose that the solution $\hat{\Theta}$ of problem (\ref{dg:2}) is block diagonal with $T$ known $C_t$, $t=1,\dots,T$, i.e., each estimated precision matrix takes the form specified in (\ref{dg:13}) with $U=\I$.  Then, the submatrices $\hat{\Theta}_t=(\hat{\Theta}_t^1,\dots,\hat{\Theta}_t^K)$,  $t=1,\dots,T$,  can be obtained by solving the problem (\ref{dg:2}) separately on  the corresponding set of features only.
	
\end{theorem}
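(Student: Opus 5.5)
The plan is to show that, once $\hat{\Theta}$ is block diagonal with respect to $C_1,\dots,C_T$, the objective of (\ref{dg:2}) restricted to block diagonal variables splits into a sum of $T$ independent copies of the same model. Each copy is posed on the features in $C_t$, with data $S^k_t := (S^k)_{C_tC_t}$. Strict convexity of the log-determinant term will then identify the blocks $\hat{\Theta}_t$ with the unique minimizers of these smaller problems.

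\textbf{Step 1 (decomposition of the objective).} For any block diagonal $\Theta=(\Theta^1,\dots,\Theta^K)$ with diagonal blocks $\Theta^k_t$, I would verify three identities.
\begin{itemize}
\item Since $\det\Theta^k=\prod_t\det\Theta^k_t$, we get $h(\Theta^k)=\sum_t h(\Theta^k_t)$. This includes the value $+\infty$: $\Theta^k\succ0$ if and only if every $\Theta^k_t\succ0$.
\item $\langle S^k,\Theta^k\rangle=\sum_t\langle S^k_t,\Theta^k_t\rangle$, because the off-diagonal blocks of $\Theta^k$ vanish, so the off-diagonal blocks of $S^k$ contribute nothing.
\item $\P(\Theta)=\sum_t\P_t(\Theta_t)$, where $\P_t$ is the penalty (\ref{dg:12}) restricted to index pairs in $C_t\times C_t$. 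This holds because $\P(\Theta)=\sum_{i\ne j}\phi(\Theta_{[ij]})$ and $\phi(\mathbf{0})=0$, so every pair $(i,j)$ lying in different blocks contributes zero.
\end{itemize}
Hence $L(\Theta)+\P(\Theta)=\sum_{t=1}^T F_t(\Theta_t)$, where $F_t$ is the objective of (\ref{dg:2}) posed on the features in $C_t$ only.

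\textbf{Step 2 (optimality of the blocks).} Let $\tilde{\Theta}_t$ be any feasible point of the $t$-th subproblem. Form $\Theta'$ from $\hat{\Theta}$ by replacing its $t$-th block with $\tilde{\Theta}_t$, keeping the other blocks and the zero off-diagonal blocks. Then $\Theta'$ is block diagonal and feasible for (\ref{dg:2}). Global optimality of $\hat{\Theta}$ together with Step 1 gives
\[\sum_s F_s(\hat{\Theta}_s)\le F_t(\tilde{\Theta}_t)+\sum_{s\ne t}F_s(\hat{\Theta}_s).\]
Since $\hat{\Theta}$ has finite objective, every $F_s(\hat{\Theta}_s)$ is finite, so we may cancel the terms $s\ne t$ and obtain $F_t(\hat{\Theta}_t)\le F_t(\tilde{\Theta}_t)$. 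Thus $\hat{\Theta}_t$ minimizes the $t$-th subproblem.

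\textbf{Step 3 (uniqueness).} Each $F_t$ is strictly convex on its domain, because $-\log\det$ is strictly convex on $\mathbb{S}^{|C_t|}_{++}$ and the remaining terms are convex. Therefore the minimizer of the $t$-th subproblem, when it exists, is unique. By Step 2 it exists and equals $\hat{\Theta}_t$, so solving the $t$-th subproblem by any method recovers $\hat{\Theta}_t$ exactly. Conversely, assembling the $T$ subproblem solutions reproduces $\hat{\Theta}$.

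\textbf{Main obstacle.} The only delicate point is the cross-block term in Step 1, namely that the penalty and the linear term leave no residual coupling between blocks. This relies on $\phi(\mathbf{0})=0$ and on the off-diagonal entries of $S^k$ being annihilated by the zero blocks of $\Theta^k$. The remaining bookkeeping is the extended-value convention for $h$, which I would handle by noting that positive definiteness is equivalent blockwise.
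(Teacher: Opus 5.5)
Your proof is correct. The paper states this result as trivial and omits the proof, and your argument is exactly the routine verification it leaves to the reader: the objective splits over blocks for block-diagonal variables (since $\det$ factors, the cross-block entries of $S^k$ are annihilated, and $\phi(\mathbf{0})=0$), block replacement then shows each $\hat{\Theta}_t$ minimizes its subproblem, and strict convexity of $-\log\det$ gives uniqueness.
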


From the above theorem, it can be seen that when  the large-scale  cMGGMs  admits a block diagonal solution, the original problem can be decomposed into a group of smaller scale  problems. Obviously, this decomposition can significantly reduce the computational cost. It raises a key question: How to efficiently identify the block diagonal structure of the cMGGMs solution  before solving the problem? 
To explore this issue, Danaher et al. 
 \cite{danaher2014joint} provided the  necessary and sufficient condition for the problem (\ref{dg:2})   to have a block diagonal solution, but only for the special case of $K=2$. 
Subsequently, Yang et al.  \cite{yang2015fused} derived an equivalent condition to determine whether the solution to the fused graphical Lasso  possesses a block diagonal structure when $K>2$.
In addition, Yang et al. 
 \cite{yang2015fused} proposed a screening strategy for general fused regularization. However, the theoretical validity of this strategy was not established in a systematic manner, which limits its applicability to specific regularization models. 
In the subsequent content of this section, we will adapt the ideas in 
 \cite{yang2015fused} to derive the necessary and sufficient condition for the solution of the cMGGMs to be block diagonal for $K>2$ graphs.

First of all, we establish several technical lemmas as follows.

\begin{lemma}\label{le:2}
	Let $I\subseteq \{1,\dots,n\}$ and $J\subseteq\{1,\dots,\frac{n(n-1)}{2}\}$ be two arbitrary index sets , 
	with  $\bar{I}$ and $\bar{J}$ being  the respective complements in $\{1,\dots,n\} $  and $\{1,\dots,\frac{n(n-1)}{2}\}$. 
	Consider a matrix $C$ such that 
	\begin{equation*} Cy=[y_1-y_2; \dots; y_1-y_n; y_2-y_3; \dots; y_2-y_n; \dots ;y_{n-1}-y_n]\in\mathbb{R}^{\frac{n(n-1)}{2}}.
		\end{equation*}
	Define
	\begin{equation}\label{dg:15}
		P_{I,J}=\left \{ y\in \mathbb{R}^n \:|\:y_I\ge \mathbf{0}, y_{\bar{I} }\le \mathbf{0}, C_Jy\ge\mathbf{0},  C_{\bar{J} }y\le \mathbf{0} \right \} ,
	\end{equation}	
	where $C_J$ is the matrix consisting of the rows of $C$ indexed by $J$.
	Then, the following statements hold:
	\begin{itemize}
		\item [(a)] $P_{I,J}$ is either equal to $ \{\mathbf{0}\}$ or unbounded, with the origin being its unique extreme point in both cases.
		\item[(b)] Assume that $P_{I,J}$ is unbounded. Then, the set of all extreme rays of $P_{I,J}$ denoted by $\text{ext
		}(P_{I,J})$ is nonempty and satisfies  
		\begin{equation*} \cup \left \{ \ext(P_{I,J})\: |\: I\subseteq \{1,\dots,n\},  J\subseteq\{1,\dots,\frac{n(n-1)}{2}\} \right \} =Q,
			\end{equation*}
		where
		\begin{equation*}\label{dg:16}
			Q:=\{\alpha d \: |\: d_i=1 \:\text{or}\:  0, i=1,\dots,n, \alpha \ne 0 \}\setminus \{ \mathbf{0} \}.
		\end{equation*}
	\end{itemize}
\end{lemma}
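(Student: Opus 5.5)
Since $P_{I,J}$ in (\ref{dg:15}) is cut out by finitely many homogeneous linear inequalities, it is a polyhedral cone. The plan is to use standard polyhedral theory: characterize extreme points and rays of a cone through its lineality space and through active constraints. Throughout, a ray is identified with any nonzero direction vector generating it, which is how the union in (b) is to be read.

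\emph{Part (a).} First I will show that $P_{I,J}$ is pointed, i.e.\ it contains no line. Suppose $y$ and $-y$ both lie in $P_{I,J}$. The sign constraints give $y_i\ge0$ and $-y_i\ge0$ for $i\in I$, and $y_i\le 0$ and $-y_i\le 0$ for $i\in\bar I$. Every coordinate is constrained, since $I\cup\bar I=\{1,\dots,n\}$, so $y=\mathbf{0}$. A pointed polyhedral cone has the origin as its unique extreme point. This is because any nonzero $y$ in the cone is the midpoint of $\tfrac12 y$ and $\tfrac32 y$, both of which lie in the cone. Finally, a cone is either $\{\mathbf{0}\}$ or contains some nonzero $y$ together with all $ty$ for $t\ge0$, in which case it is unbounded.

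\emph{Part (b): nonemptiness and the inclusion into $Q$.} A nonzero pointed polyhedral cone is the conical hull of its extreme rays (Minkowski--Weyl), so $\ext(P_{I,J})\neq\emptyset$. Next I will use the algebraic characterization of extreme rays. A nonzero $d\in P_{I,J}$ spans an extreme ray exactly when the constraints active at $d$ have rank $n-1$. Here every active constraint has the form $y_i=0$ or $y_i-y_j=0$. Build the graph on $\{1,\dots,n\}$ whose edges are the active difference constraints, and call a connected component "grounded" if some vertex in it carries an active constraint $y_i=0$. The solution space of the active system is then parametrized by one free value per ungrounded component. Rank $n-1$ therefore forces exactly one ungrounded component $S$, with all other coordinates equal to zero. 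Consequently $d=\alpha\mathbf 1_S$ with $\alpha\neq0$, so $d\in Q$.

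\emph{Part (b): the reverse inclusion.} Given $d=\alpha\mathbf 1_S$ with $S\neq\emptyset$ and $\alpha>0$, I will choose $I=S$ (the coordinates outside $S$ are zero, so the constraint $d_{\bar I}\le\mathbf 0$ holds). I will take $J$ to be the set of rows $(i,j)$ with $d_i-d_j\ge0$, so that $C_{\bar J}d\le \mathbf 0$ also holds; the case $\alpha<0$ is symmetric, with $I=\bar S$. Then $d\in P_{I,J}$. The active constraints include $y_i=0$ for $i\notin S$ and $y_i=y_j$ for $i,j\in S$. A spanning tree on $S$ together with these zero constraints gives $n-1$ independent equations, so $d$ spans an extreme ray. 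The main care needed is in the rank count of the graph argument: I must verify that grounded and ungrounded components correctly describe the null space, and handle the degenerate case $|S|=n$, where there are no zero constraints.
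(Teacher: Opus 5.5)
Your argument is correct. Part (b) follows the paper's route: extreme rays of the pointed cone $P_{I,J}$ are the nonzero points whose active constraints have rank $n-1$, and the reverse inclusion comes from an explicit choice of $(I,J)$. Part (a) is argued differently. The paper counts linearly independent active inequalities at a putative nonzero extreme point. You instead note that every coordinate carries a sign constraint, so $P_{I,J}$ is pointed, and then use that a nonzero $y$ is the midpoint of $\tfrac12 y$ and $\tfrac32 y$. Your version is shorter and avoids the paper's somewhat loose bookkeeping.

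The midpoint argument only gives uniqueness, so state explicitly that pointedness is what makes the origin extreme. If $\mathbf{0}=\lambda u+(1-\lambda)v$ with $u\neq\mathbf{0}$, then $-u$ is a positive multiple of $v$ and hence lies in $P_{I,J}$, contradicting pointedness.

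In (b), your connected-component description of the null space makes precise the paper's claim that the nonzero entries of an extreme ray coincide. Since $C$ contains every pair, the components are exactly the level sets of $d$. So the null space of the active system has dimension equal to the number of distinct nonzero values of $d$.

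The active hyperplanes $y_i=0$ and $y_i=y_j$ at $d$ depend only on $d$, not on $(I,J)$. Hence any $(I,J)$ with $d\in P_{I,J}$ works in the converse. Your choice $I=S$, $J=\{(i,j): d_i\ge d_j\}$ and the paper's choice ($I=\{1,\dots,n\}$, $J$ the rows whose first index lies in the support, for $\alpha>0$) are equally valid. Your spanning-tree count also supplies the rank verification that the paper omits.

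The two points you flag as needing care are routine. The null-space description follows directly from the level-set observation. The case $|S|=n$ needs nothing special, since a spanning tree on all $n$ vertices already gives $n-1$ independent equations.
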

\begin{proof} We shall prove the assertions (a) and (b) in turn.
	
	\begin{itemize}
		\item  [(a)] From the definition of $P_{I,J}$, it is immediate that  either  $P_{I,J}=\{\mathbf{0}\}$ or $P_{I,J}$ is unbounded, and in both cases,  $\mathbf{0}$ is an extreme point of $P_{I,J}$. Thus, we only need to prove the uniqueness of the extreme point. Assume for the sake of contradiction that there exists an extreme point $y\ne \mathbf{0}$ in $P_{I,J}$.  Suppose that, among the inequalities $C_Jy\ge\mathbf{0}$ and $C_{\bar{J} }y\le\mathbf{0} $, there are $n_1 \,(1\le n_1 \le n-1)$ linearly independent active inequalities at $y$, and the corresponding components of $y$ are nonzero. 
		This means that at least $n_1+1$ components of $y$ satisfy these  linearly independent active inequalities.
		Since $y$ has at most $n-n_1-1$ zero entries with respect to $P_{I,J}$, it follows that the total number of linearly independent active inequalities at $y$ cannot exceed $n-1$. This result contradicts   the assumption, and hence $\mathbf{0}$ is the unique extreme point of $P_{I,J}$.
		\item[(b)] First, we show that $  \cup \left \{ \text{ext}(P_{I,J})\: |\: I\subseteq \{1,\dots,n\},  J\subseteq\{1,\dots,\frac{n(n-1)}{2}\} \right \}\subseteq Q$. Given   two arbitrary index sets $I\subseteq \{1,\dots,n\}$ and $J\subseteq\{1,\dots,\frac{n(n-1)}{2}\}$, let $z\in \text{ext}(P_{I,J})$ be arbitrarily chosen. Without loss of generality,  assume that $z$ has exactly $n_2\,(0\le n_2\le n-1)$ zero components. It follows that $z$ contains $n-n_2$ nonzero components  and that there are $n-n_2-1$ linearly independent active inequalities from $C_Jz\ge\mathbf{0}$ and $C_{\bar{J} }z\le\mathbf{0} $ at $z$, which are linearly independent of those arising from $z_I\ge \mathbf{\mathbf{0}}$ and $z_{\bar{I} }\le \mathbf{0}$.
		Therefore, the $n-n_2$ nonzero components of $z$ are equal, which implies $z\in Q$.
In contrast, $\forall z\in Q$, let $\K=\{i\:|\: z_i=\alpha\ne 0,\:i=1,\dots,n \}$ and $\bar{\K}$ be the complement of $\K$ with respect to $\{1,\dots,n\}$. If $\alpha>0$, it easily follows that $z\in\text{ext}(P_{I,J})$ for $I=\{1,\dots,n\}$ and $J=\{j\:|\: j=\frac{(i_0-1)(2n-i_0)}{2}+t,\:\:\forall i_0\in \K,\:t\in \{1,\dots,n-i_0\}\}$.    Analogously, if $\alpha<0$, it can be shown that $d\in\text{ext}(P_{I,J})$ with $I=\emptyset$ and $J$ being the complement of $\bar{J}=\{j\:|\: j=\frac{(i_0-1)(2n-i_0)}{2}+t,\:\:\forall i_0\in \K,\:t\in \{1,\dots,n-i_0\}\}$. This means that $d\in \cup \left \{ \text{ext}(P_{I,J})\: |\: I\subseteq \{1,\dots,n\},  J\subseteq\{1,\dots,\frac{n(n-1)}{2}\} \right \}$. In summary, we conclude that the equivalence is valid.                                                                         
 \end{itemize}

The proof is complete.
\end{proof}

Lemma \ref{le:2} establishes the fundamental properties of the set $P_{I,J}$, which form the basis  for deriving equivalence results in the subsequent analysis.

\begin{lemma}\label{le:3}
	Given $x\in\mathbb{R}^n$, $\mu_1$, $\mu_2> 0$,  define
	\begin{equation*} 
		f(y):=\left \langle  x,y\right \rangle -\mu_1\left \| y \right \| _1-\mu_2\left \| Cy \right \|_1 .
		\end{equation*}
	Then, $f(y)\le 0$ for all $y\in \mathbb{R}^n$ if and only if $x$ satisfies the following inequalities:
	\begin{equation*} \left | \sum_{i\in \A }^{} x_i \right | \le|\A |\mu_1+|\A |(n-|\A |) \mu_2
	\end{equation*}
	for all nonempty subsets $ \A \subseteq \left \{ 1,\dots,n \right \} $.
\end{lemma}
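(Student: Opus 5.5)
The approach is to reduce the condition $f(y)\le 0$ for all $y$ to finitely many test vectors, using the polyhedral structure from Lemma \ref{le:2}.

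The regions $P_{I,J}$ in (\ref{dg:15}), taken over all index sets $I\subseteq\{1,\dots,n\}$ and $J\subseteq\{1,\dots,\frac{n(n-1)}{2}\}$, cover $\mathbb{R}^n$. On each $P_{I,J}$ the signs of every $y_i$ and every $(Cy)_l$ are fixed. Hence $\|y\|_1$ and $\|Cy\|_1$ are linear there, and $f$ restricted to $P_{I,J}$ is a linear function $\langle x - \mu_1 s - \mu_2 C^\top t, y\rangle$ for fixed sign vectors $s,t$. So $f\le 0$ on $\mathbb{R}^n$ if and only if $f\le 0$ on every $P_{I,J}$.

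By Lemma \ref{le:2}(a), each $P_{I,J}$ is a pointed polyhedral cone, i.e.\ $\{\mathbf{0}\}$ or a cone whose only extreme point is the origin. By Minkowski's theorem it is therefore the conic hull of its extreme rays. A linear function is nonpositive on such a cone if and only if it is nonpositive on every extreme ray. Combining this with Lemma \ref{le:2}(b), $f\le 0$ everywhere if and only if $f(z)\le 0$ for every $z\in Q$, where $Q$ is the set of nonzero multiples of $0/1$ vectors. Since $f$ is positively homogeneous, it suffices to take $z=\pm \mathbf{1}_{\A}$ for a nonempty $\A\subseteq\{1,\dots,n\}$.

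Now compute $f$ on these vectors. We have $\|\mathbf{1}_{\A}\|_1=|\A|$. The entry $(C\mathbf{1}_{\A})_l=y_i-y_j$ is nonzero, with absolute value $1$, exactly when one of $i,j$ lies in $\A$ and the other does not, so $\|C\mathbf{1}_{\A}\|_1=|\A|(n-|\A|)$. Thus
\[
f(\pm\mathbf{1}_{\A})=\pm\sum_{i\in\A}x_i-|\A|\mu_1-|\A|(n-|\A|)\mu_2,
\]
and requiring both signs to be $\le 0$ is exactly the stated inequality. The converse direction (the inequalities imply $f\le 0$) is the same chain of equivalences read backwards.

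An alternative check of the converse is through duality: $f\le 0$ everywhere iff $x\in\partial(\mu_1\|\cdot\|_1+\mu_2\|C\cdot\|_1)(\mathbf 0)$. I will use the cone argument as the main line.

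The main obstacle is the reduction step. We need $f\le0$ on a pointed cone to follow from $f\le 0$ on its extreme rays, and we need every extreme ray of every $P_{I,J}$ to be of the form in $Q$, which is what Lemma \ref{le:2}(b) supplies. If $P_{I,J}=\{\mathbf{0}\}$ there is nothing to check, since $f(\mathbf{0})=0$. Otherwise I will invoke the standard representation of pointed polyhedral cones as conic combinations of their extreme rays, and then use the linearity of $f$ on that cone.
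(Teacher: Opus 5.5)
Your proposal is correct and follows essentially the same route as the paper: cover $\mathbb{R}^n$ by the cones $P_{I,J}$, reduce $f\le 0$ on each cone to its extreme rays via Lemma~\ref{le:2}, and then evaluate $f$ on the set $Q$. You also make explicit two points the paper leaves implicit: that $f$ is linear on each $P_{I,J}$, which is what justifies testing only extreme rays, and the count $\|C\mathbf{1}_{\A}\|_1=|\A|(n-|\A|)$, which turns $f(\pm\mathbf{1}_{\A})\le 0$ into the stated inequalities.
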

\begin{proof}
	From the definition of $P_{I,J}$ in (\ref{dg:15}),  it can be shown that 
	\begin{equation*} \mathbb{R}^n=\cup \left \{ P_{I,J}\: |\: I\subseteq \{1,\dots,n\},  J\subseteq\{1,\dots,\frac{n(n-1)}{2}\} \right \}.
		\end{equation*}   Therefore, $\forall y\in\mathbb{R}^n$, $f(y)\le 0$ if and only if $ f(y)\le 0$ for all $y\in P_{I,J}$ and every subset $I\subseteq \{1,\dots,n\}$,  $J\subseteq\{1,\dots, \frac{n(n-1)}{2}\}$. 
	Next, we consider the following two cases:
	\begin{itemize}
		\item[(i)] $P_{I,J}$ is bounded. In this case, it follows from  Lemma \ref{le:2} (a) that $P_{I,J}=\{\mathbf{0}\}$ and $f(y)=0$ for $y\in P_{I,J}$. 
		\item[(ii)] $P_{I,J}$ is unbounded. It is easily verified that $P_{I,J}$ is the finitely generated cone generated by $\text{ext}(P_{I,J})$.  Consequently, the inequality $ f(y)\le 0$ holds for all $y\in P_{I,J}$ if and only if   it holds for all $y\in \text{ext}(P_{I,J})$. Together with Lemma \ref{le:2} (b), we see that $ f(y)\le 0$ for all $y\in \text{ext}(P_{I,J})$ is equivalent to   $ f(y)\le 0$ for all $y\in Q$. By the definitions of $Q$ and $f$, it then follows that the desired equivalence holds.
	\end{itemize}

    The proof is complete.
\end{proof}

The function $f(\cdot )$ is instrumental in deriving the necessary and sufficient condition for identifying the block structured solution. Next, we are ready to present a key equivalence relation that will support the analysis for identifying the block structure.

\begin{lemma}\label{le:1}
	Let $x\in\mathbb{R}^n$, $\mu_1$, $\mu_2> 0$ be given. Then the following linear system
	\begin{equation}\label{dg:23}
		\left\{\begin{matrix}
			x+\mu_1\gamma +\mu_2C^{\top }\nu =0,\:\:\:\:\:\:\:\:\:\:\:\:\:\:\:\:\:\:\\[2mm]
			-1\le \gamma _i\le 1,\:\: i=1,\dots,n, \:\:\:\:\:\:\:\:\:\\[2mm]
			-1\le \nu  _i\le 1,\:\: i=1,\dots,\frac{n(n-1)}{2}  
		\end{matrix}\right.
	\end{equation}
	has a solution $(\gamma,\nu)$ if and only if $(x,\mu_1,\mu_2)$ satisfies
	\begin{equation*}
	 \left | \sum_{i\in \A }^{} x_i \right | \le|\A |\mu_1+|\A |(n-|\A |) \mu_2
	\end{equation*}
	for all nonempty subsets $\A \subseteq \left \{ 1,\dots,n \right \} $.
\end{lemma}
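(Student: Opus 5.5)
The plan is to reduce the solvability of (\ref{dg:23}) to the nonpositivity of the function $f$ in Lemma \ref{le:3}, via convex duality. Lemma \ref{le:3} then turns that condition into the stated family of subset inequalities.

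First observe that the set
\[
\mathcal{S}:=\left\{\mu_1\gamma+\mu_2C^{\top}\nu \:|\: \|\gamma\|_\infty\le 1,\ \|\nu\|_\infty\le 1\right\}
\]
is a nonempty compact convex set, being the linear image of a box. Its support function is
\[
\sigma_{\mathcal{S}}(y)=\mu_1\|y\|_1+\mu_2\|Cy\|_1 ,
\]
because $\sup_{\|\gamma\|_\infty\le1}\langle \gamma,y\rangle=\|y\|_1$ and $\sup_{\|\nu\|_\infty\le1}\langle C^{\top}\nu,y\rangle=\|Cy\|_1$. The system (\ref{dg:23}) is solvable exactly when $-x\in\mathcal{S}$.

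Next apply the separation theorem to the closed convex set $\mathcal{S}$. We have $-x\in\mathcal{S}$ if and only if $\langle -x,y\rangle\le\sigma_{\mathcal{S}}(y)$ for all $y\in\mathbb{R}^n$. Replacing $y$ by $-y$, and using that $\sigma_{\mathcal{S}}$ is even (the box is symmetric), this becomes $\langle x,y\rangle-\mu_1\|y\|_1-\mu_2\|Cy\|_1\le 0$ for all $y$, that is, $f(y)\le0$ for all $y$. For necessity, one can also argue directly: if $(\gamma,\nu)$ solves the system, then $\langle x,y\rangle=-\mu_1\langle\gamma,y\rangle-\mu_2\langle\nu,Cy\rangle\le\mu_1\|y\|_1+\mu_2\|Cy\|_1$ by H\"older's inequality. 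An alternative to separation is Farkas' lemma or LP duality applied to the feasibility problem; the support-function route is cleaner.

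Finally invoke Lemma \ref{le:3}. It gives that $f\le0$ on $\mathbb{R}^n$ if and only if $|\sum_{i\in\A}x_i|\le|\A|\mu_1+|\A|(n-|\A|)\mu_2$ for every nonempty $\A$. As a check, at $y=\pm\mathbf{1}_{\A}$ we have $\|y\|_1=|\A|$ and $\|Cy\|_1=|\A|(n-|\A|)$, since exactly the pairs with one index in $\A$ and one outside contribute. This confirms the constants.

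The main obstacle is making sure the separation step is airtight, namely that $\mathcal{S}$ is closed so that no closure is lost. This holds because $\mathcal{S}$ is the continuous image of a compact box. The rest is routine once Lemma \ref{le:3} is available.
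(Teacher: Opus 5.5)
Your proposal is correct and follows essentially the same route as the paper: both reduce solvability of the system, via a duality argument, to the condition $f(y)\le 0$ for all $y\in\mathbb{R}^n$, and then invoke Lemma~\ref{le:3}. The only difference is the duality tool: the paper treats the system as a zero-objective linear program and applies LP strong duality, whereas you use the support function of the compact set $\mathcal{S}$ and the separation theorem, which is equivalent here and makes the closedness point explicit.
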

\begin{proof}
	The solvability of the linear system (\ref{dg:23}) is equivalent to  the existence of an optimal solution to the following linear program:
	\begin{equation}\label{dg:24}
		\min_{\gamma ,\nu } \left \{ 0\:|\: (\gamma ,\nu)\text{ satisfies } (\ref{dg:23}) \right \}.
	\end{equation}	
	  Then, the Lagrangian dual of (\ref{dg:24}) can be simplified to the following form:
	\begin{equation}\label{dg:25}
		\max_{y} f(y):=\left \langle  x,y\right \rangle -\mu_1\left \| y \right \| _1-\mu_2\left \| Cy \right \| _1.
	\end{equation}
	From the Strong Duality Theorem \cite[Proposition 6.4.4]{bertsekas2003convex}, 
	(\ref{dg:24}) admits an optimal solution if and only if its dual  (\ref{dg:25}) achieves an optimal value of zero, that is, $f(y)\le 0$ for all $y$. Therefore, the result of this lemma follows directly from Lemma \ref{le:3}.
\end{proof}

We are now in a position to establish the necessary and sufficient condition under which the solution to the cMGGMs exhibits a block diagonal structure.

\begin{theorem}\label{Th:2}
	Consider the cMGGMs. Then, it has a block diagonal solution $\hat{\Theta}^k$, $k=1,\dots,K$, with $T$ known blocks $C_t$, $t=1,\dots,T$, if and only if $S^k$, $k=1,\dots,K$, $\mu_1$, and $\mu_2$ satisfy the following conditions:
	\begin{equation}\label{dg:20}
		\left | \sum_{k\in \A }^{} S_{ij}^k \right | \le|\A |\mu_1+|\A |(K-|\A |) \mu_2
	\end{equation} 
	for all nonempty subsets $\A \subseteq \left \{ 1,\dots,K \right \} $, $i\in C_t$, $j\in C_{t'}$, $t\ne t'$.
\end{theorem}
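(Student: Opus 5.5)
The plan is to work through the KKT conditions of problem (\ref{dg:2}). Because $L$ is strictly convex on $\Y_{++}$ and $\P$ is convex, the cMGGMs has a unique minimizer. A point $\hat\Theta\in\Y_{++}$ is that minimizer if and only if
\begin{equation*}
0\in -(\hat\Theta^k)^{-1}+S^k+\partial_{\Theta^k}\P(\hat\Theta),\quad k=1,\dots,K .
\end{equation*}
Since $\P$ is separable over the vectors $\Theta_{[ij]}$ with $i\ne j$, this condition splits entrywise. Writing $W^k=(\hat\Theta^k)^{-1}$, it reads $W^k_{ii}=S^k_{ii}$ on the diagonal and
\begin{equation*}
S_{[ij]}-W_{[ij]}+\partial\phi(\hat\Theta_{[ij]})\ni 0,\qquad i\ne j,
\end{equation*}
where, counting the symmetric pair $(i,j),(j,i)$, the factor appearing in front of $\partial\phi$ is handled consistently with the convention in (\ref{dg:12}). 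The subdifferential of $\phi$ at a point $x$ is
\begin{equation*}
\partial\phi(x)=\left\{\mu_1\gamma+\mu_2 C^\top\nu \,\middle|\, \gamma\in\partial\|\cdot\|_1(x),\ \nu\in\partial\|\cdot\|_1(Cx)\right\},
\end{equation*}
with $C$ the pairwise difference matrix of Lemma \ref{le:2}, taking $n=K$. In particular, $\partial\phi(\mathbf 0)=\{\mu_1\gamma+\mu_2C^\top\nu \mid \|\gamma\|_\infty\le1,\ \|\nu\|_\infty\le 1\}$.

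For the \emph{sufficiency} direction, assume (\ref{dg:20}) holds. By Theorem \ref{Th:1}, we first solve the problem separately on each block $C_t$. This gives $\hat\Theta_t=(\hat\Theta^1_t,\dots,\hat\Theta^K_t)$ satisfying the KKT conditions restricted to the entries inside $C_t$. We then assemble the block diagonal candidate $\hat\Theta^k=\Diag(\hat\Theta^k_1,\dots,\hat\Theta^k_T)$. Its inverse $W^k$ is block diagonal with blocks $(\hat\Theta^k_t)^{-1}$, so the within-block KKT conditions hold automatically. For $i\in C_t$ and $j\in C_{t'}$ with $t\ne t'$, we have $\hat\Theta_{[ij]}=\mathbf 0$ and $W_{[ij]}=\mathbf 0$, so the KKT condition reduces to $S_{[ij]}\in-\partial\phi(\mathbf 0)$. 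This is exactly the solvability of the linear system (\ref{dg:23}) with $x=S_{[ij]}$ and $n=K$. By Lemma \ref{le:1}, this is equivalent to (\ref{dg:20}). Hence the assembled candidate satisfies all KKT conditions, and uniqueness of the minimizer makes it the solution.

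For \emph{necessity}, suppose the solution $\hat\Theta$ is block diagonal with the given blocks. Then every $W^k=(\hat\Theta^k)^{-1}$ is block diagonal as well. So for each off-block pair $(i,j)$ we again have $\hat\Theta_{[ij]}=W_{[ij]}=\mathbf 0$, and the KKT condition forces $-S_{[ij]}\in\partial\phi(\mathbf 0)$. Thus (\ref{dg:23}) is solvable with $x=S_{[ij]}$, and Lemma \ref{le:1} yields (\ref{dg:20}) for all nonempty $\A\subseteq\{1,\dots,K\}$.

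I expect the main obstacle to be bookkeeping rather than conceptual. First, one must correctly identify $\partial\phi(\mathbf 0)$ with the set described by (\ref{dg:23}) for the matrix $C$ of Lemma \ref{le:2}. Second, the scaling from the symmetric double sum $\sum_{j\ne j'}$ must line up so that the constants in (\ref{dg:20}) come out as $\mu_1$ and $\mu_2$ with no extra factor of $2$; this is done by differentiating with respect to the symmetric pair jointly. Third, one must justify the KKT characterization on the open set $\Y_{++}$, which I would handle by noting that $h$ is a barrier, so every minimizer lies in the interior of its domain and the standard subdifferential sum rule applies.
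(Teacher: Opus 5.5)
Your proof is correct and uses the same core ingredients as the paper: the entrywise KKT system (\ref{dg:17}) together with Lemma \ref{le:1} applied to $x=S_{[ij]}$. The necessity direction is essentially identical to the paper's. Block diagonality passes to $(\hat\Theta^k)^{-1}$, so the off-block condition collapses to solvability of (\ref{dg:23}).

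The real difference is in sufficiency. The paper argues directly from the optimal $\hat\Theta$. It takes $(\gamma_{[ij]},\nu_{[ij]})$ from Lemma \ref{le:1}, recalls that (\ref{dg:17}) holds at $\hat\Theta$, and then asserts that $\hat\Upsilon^k$ is block diagonal. As written, that step skips something. Knowing $-S_{[ij]}\in\partial\phi(\mathbf 0)$ says nothing by itself about $\hat\Upsilon_{[ij]}$, because the multipliers in (\ref{dg:17}) live in $\partial\phi(\hat\Theta_{[ij]})$, not $\partial\phi(\mathbf 0)$.

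Your construct-and-verify argument supplies exactly the missing step. You solve each block separately and assemble the block-diagonal candidate. The within-block and diagonal conditions are then inherited, and the off-block conditions reduce to $-S_{[ij]}\in\partial\phi(\mathbf 0)$. Uniqueness from strict convexity of $L$ finishes the proof. So your route gives a complete proof, at the price of invoking existence and uniqueness explicitly.

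Two small remarks:
\begin{itemize}
\item Citing Theorem \ref{Th:1} in the sufficiency direction is unnecessary and looks circular, since that theorem presupposes the block structure you are proving. All you use is that each block subproblem has a minimizer. This holds under the same standing assumption that gives existence for the full problem, e.g.\ $S^k_{ii}>0$ and $\mu_1>0$. The paper also leaves that assumption implicit.
\item Your factor-of-two bookkeeping is right. We have $\P=2\sum_{i<j}\phi(\Theta_{[ij]})$, and the Frobenius pairing on $\mathbb{S}^p$ also double-counts off-diagonal entries. The two factors cancel, so the entrywise condition carries $\partial\phi$ with constants exactly $\mu_1,\mu_2$, matching (\ref{dg:17}).
\end{itemize}
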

\begin{proof}
	For simplicity, let $\hat{\Upsilon }^k=(\hat{\Theta}^k)^{-1}$ for $k=1,\dots,K$.  $\hat{\Theta} =(\hat{\Theta}^1,\cdots , \hat{\Theta}^K)$ is the optimal solution to the problem (\ref{dg:2}) if and only if it satisfies the following system:
	\begin{equation}\label{dg:17}
		\left\{\begin{matrix}
			-\hat{\Upsilon }_{[ii]}+S_{[ii]}=0, \:\:\:\:\:\:\:\:\:\:\:\:\:\:\:\:\:\:\:\:\:\:\:\:\:\:\:\:\:\:\:\:\:\:\:\:\:\:\:\:\:\:\\[2mm]
			-\hat{\Upsilon }_{[ij]}+S_{[ij]}+\mu_1\gamma_{[ij]}+\mu_2C^{\top}\nu_{[ij]}=0 
		\end{matrix}\right.
	\end{equation}
	for all $i \ne j$, where $\gamma_{[ij]} \in \partial \left \|   \Theta_{[ij]}\right \|_1$ and $C^\top \nu_{[ij]} \in \partial \left \|   C\Theta_{[ij]}\right \|_1$ at $\Theta_{[ij]} = \hat{\Theta}_{[ij]}$.

	First,  suppose that $\hat{\Theta} =(\hat{\Theta}^1,\cdots , \hat{\Theta}^K)$ is a block diagonal optimal solution to problem (\ref{dg:2}) with $T$ known blocks $C_t$, $t=1,\dots,T$. Then, $\hat{\Upsilon } =(\hat{\Upsilon }^1,\cdots , \hat{\Upsilon }^K)$ shares the same block diagonal structure, implying $\hat{\Upsilon }^k_{ij}=\hat{\Theta}^k_{ij}=0$ for all $i\in C_t$, $j\in C_{t'}$ with $t\ne t'$. Together with (\ref{dg:17}), it implies that for each $i\in C_t$, $j\in C_{t'}$, $t\ne t'$, there exist $\gamma_{[ij]}\in\mathbb{R}^K$, $\nu_{[ij]}\in\mathbb{R}^{\frac{K(K-1)}{2}}$ such that
	\begin{equation}\label{dg:18}
		\left\{\begin{matrix}
			S_{[ij]}+\mu_1\gamma _{[ij]}+\mu_2C^{\top}\nu _{[ij]}=0,\:\:\:\:\:\:\:\:\:\:\:\:\:\:\:\:\:\:\:\\[2mm]
			-1\le (\gamma _{[ij]})_k\le1,\:\: k=1,\dots,K,  \:\:\:\:\:\:\:\:\:\:\:\:\:\\[2mm]
			-1\le (\nu _{[ij]})_k\le1,\:\: k=1,\dots,\frac{K(K-1)}{2}.
		\end{matrix}\right.
	\end{equation}
	By combining (\ref{dg:18}) with Lemma \ref{le:1}, it follows that the desired result (\ref{dg:20}) holds.

	Conversely, suppose that  (\ref{dg:20}) holds for all nonempty subsets $ \A \subseteq \left \{ 1,\dots,K \right \} $ and all $i\in C_t$, $j\in C_{t'}$ with $t\ne t'$. Then, by Lemma \ref{le:1},  there exist $\gamma_{[ij]}\in\mathbb{R}^K$ and  $\nu_{[ij]}\in\mathbb{R}^{\frac{K(K-1)}{2}}$ satisfying  (\ref{dg:18}) for each corresponding $(i,j)$. Since $\hat{\Theta}$ is the optimal  solution to problem (\ref{dg:2}), the first order optimality condition  implies that (\ref{dg:17}) holds  for all $i\ne j$.  Then $\hat{\Upsilon }^k$ is block diagonal and hence $\hat{\Theta}^k = (\hat{\Upsilon }^k)^{-1}$ is also block diagonal for each $k = 1,\dots,K$, with the same block partition ${C_1, \dots, C_T}$.  The conclusion thus holds.
\end{proof}

Theorem \ref{Th:2} provides a useful screening rule based on condition (\ref{dg:20}) to identify block diagonal structure  in the solution to cMGGMs. By Theorems \ref{Th:1} and \ref{Th:2}, the original large scale problem can be decomposed into smaller subproblems, each restricted to a connected component, thereby reducing the overall computational cost.
However, due to the form of inequalities (\ref{dg:20}), the effectiveness of this screening rule is influenced by the parameters $\mu_1$ and $\mu_2$. When $\mu_1$ and $\mu_2$ are small, only a limited number of  block structures can be identified. 
Therefore, developing more broadly applicable and efficient screening strategies remains an important direction for future research.

\section{An Algorithm for the cMGGMs}\label{alg}
In this section, we develop an efficient algorithm with superlinear convergence for cMGGMs by applying a proximal augmented Lagrangian framework to the dual formulation. The added proximal term ensures the well-posedness of the subproblems, thereby facilitating efficient semismooth Newton updates.

Problem (\ref{dg:2})  can be equivalently written in the constrained form
\begin{equation}\label{dg:3}
	\begin{aligned}
		\underset{\Theta,\Omega}{\min }& \:\:\:L(\Theta) +\P(\Omega) \\
		\text{s.t.}	&\:\:\:\Theta=\Omega,
	\end{aligned}
\end{equation}	
whose corresponding Lagrangian dual problem is given by
\begin{equation}\label{dg:7}
	\underset{(X^{k})_{k=1}^K} {\max} -\left \{ W(X):= L^*(-X)+\P^*(X)\right \},
\end{equation}
where $L^*(\cdot)$ and $\P^*(\cdot)$ are the conjugate functions of $L(\cdot)$ and $\P(\cdot)$, respectively.

\subsection{A Proximal Augmented Lagrangian Method for the Dual Problem}\label{sec:algorithm}
To develop an augmented Lagrangian type method for solving (\ref{dg:7}), we
adopt the framework established by Rockafellar and Wets 
 \cite[Definition 11.45, Example 11.57]{Rockafellar1998VariationalA}. Specifically, 
we rewrite the dual problem (\ref{dg:7}) as the minimization of the function $W(X)=V(X;0,0)$, where $V:\Y\times\Y\times\Y\to (-\infty ,+\infty ]$ is defined by
\begin{equation*} V(X;Z_1,Z_2):= L^*(-X+Z_1)+\P^*(X+Z_2).
	\end{equation*}
The associated  Lagrangian function $\L:\Y\times\Y\times\Y\to(-\infty ,+\infty ]$  is then given by
\begin{equation}\label{dg:4}
	\begin{aligned}
			\L(X;\Theta,\Omega):&=\underset{(Z_1,Z_2)\in \Y\times \Y}{\inf } \left \{ V(X;Z_1,Z_2)-\left \langle \Theta,Z_1 \right \rangle -\left \langle \Omega,Z_2 \right \rangle  \right \}\\
			&=-L(\Theta)-\P(\Omega)-\left \langle \Theta,X \right \rangle+\left \langle\Omega,X \right \rangle. 
		\end{aligned}
\end{equation}
For any given positive scalar $\sigma$, the corresponding augmented Lagrangian function is then given by
\begin{equation*}
	\begin{aligned}
		&\L_{\sigma}(X;\Theta,\Omega)\\[2mm]
		=&\underset{(Z_1,Z_2)\in \Y\times \Y}{\sup } \left \{ \L(X;Z_1,Z_2)-\frac{1}{2\sigma }\left \| Z_1-\Theta \right \|^2-\frac{1}{2\sigma }\left \| Z_2-\Omega \right \|^2   \right \}\\[2mm]
		=&-\sum_{k=1}^{K} \frac{1}{\sigma} \Psi _{\sigma h}(\Theta^k-\sigma (X^k+S^k))+\sum_{k=1}^{K}\left ( \frac{1}{2\sigma }\left \| \Theta^k -\sigma (X^k+S^k)\right \|^2- \frac{1}{2\sigma }\left \| \Theta^k \right \|^2  \right ) \\[2mm]
		&- \frac{1}{\sigma} \Psi _{\sigma \P}(\Omega+\sigma X)+ \frac{1}{2\sigma }\left \| \Omega+\sigma X\right \|^2- \frac{1}{2\sigma }\left \| \Omega \right \|^2. 
	\end{aligned}
\end{equation*}

A  direct application of the classical augmented Lagrangian method (ALM) to problem (\ref{dg:7}) yields an $X$-subproblem, expressed as
\begin{equation*} \underset{X}{ \min }\:\:\L_{\sigma}(X;\Theta,\Omega),
	\end{equation*}
where the objective function is continuously differentiable due to the smoothness of $\Psi _{\sigma h}(\cdot)$ and $\Psi _{\sigma \P}(\cdot)$. The first order optimality condition of  this subproblem can be written as
\begin{equation*}
	\begin{aligned}
		\nabla_X \L_{\sigma}(X;\Theta,\Omega)=&-\left ( \prox_{\sigma h} \left(\Theta^1-\sigma(X^1+S^1)\right),\dots,\prox_{\sigma h} \left(\Theta^K-\sigma(X^K+S^K)\right)\right )\\[2mm]
		&+\prox_{\sigma \P}\left ( \Omega+\sigma X \right ).
	\end{aligned}
\end{equation*}
However, Newton-type methods cannot be applied directly to this nonlinear system as the associated generalized Hessian may be singular. To enhance robustness while maintaining the convergence properties of the ALM framework, a proximal  term $\frac{\tau_t}{2\sigma_t} \left \|  X-X^{(t)} \right \|^2$ is introduced into the subproblem,  following the proximal ALM strategy proposed in  \cite{rockafellar1976augmented}. 
This modification renders this subproblem strongly convex, which   avoids singularity in the generalized Hessian.
The resulting procedure constitutes the inexact proximal ALM, as detailed in Algorithm \ref{alg:dg1}.


\begin{algorithm}[H]
	\caption{A Proximal Augmented Lagrangian Method for the Dual Problem  (\ref{dg:7})}\label{alg:dg1}
	\hspace*{0.01in} \raggedright {\bf Input:} $\sigma_0, \tau_0 >0$, $(X^{(0)},\Theta^{(0)}, \Omega^{(0)})\in \Y_{++}\times\Y_{++}\times\Y_{++}$. Set $t=0$. \\
	\begin{algorithmic}[1]
		\State  Approximately compute
		\begin{equation}\label{dg:5}
			X^{(t+1)}\approx \underset{X}{ \arg \min } \left \{ \Gamma_t(X):=\L_{\sigma_t}(X;\Theta^{(t)},\Omega^{(t)})+\frac{\tau_t}{2\sigma_t} \left \|  X-X^{(t)} \right \|^2\right \} 
		\end{equation}
		to satisfy the conditions (A) and (B) below.
		
		\State Update the multipliers
		\begin{subequations}
			\begin{numcases}{} 
				\Omega^{(t+1)}=\prox_{\sigma _t\P}(\Omega^{(t)}+\sigma_tX^{(t+1)}),\notag\\ 
				(\Theta^k)^{(t+1)}=\prox_{\sigma _th}((\Theta^k)^{(t)}-\sigma_t((X^k)^{(t+1)}+S^k)),\:\: k=1,\dots,K.	\notag
			\end{numcases}
		\end{subequations}
		
		\State Update $\sigma_{t+1}\uparrow \sigma_{\infty}\le\infty$, $\tau_{t+1}\downarrow \tau_{\infty}>0$, and go to Step 1.
	\end{algorithmic}
\end{algorithm}

\begin{remark}
Based on our preliminary experiments, the initial parameter is set as $\sigma_0 = \max\{0.02,\, \min\{1,\,  \mu_1,\, 1/\left \| S \right \| \}\}$ and $\tau_0=1$. 
The parameters $\sigma_t$ and $\tau_t$ are then updated adaptively according to the following rules:
\[
\sigma_{t+1} =
\begin{cases}
\zeta\, \sigma_t, & \text{if } \dfrac{\chi_t}{\chi_{t-1}} > 0.6,\\[1em]
\sigma_t, & \text{otherwise},
\end{cases}
\quad
\text{and} \quad \tau_t = \sigma_t \cdot \max\{10^{-12},\, 10^{-2}\cdot t^{-2.5}\},
\]
where $\zeta = 2$ if $\sigma_t < 10^7$ and $\zeta = 1.3$ otherwise, $\chi_t$ denotes the  dual feasibility at the $t$-th iteration.
\end{remark}
 
 We consider the maximal monotone operator $\T$, defined via the subgradient of the convex-concave Lagrangian function in (\ref{dg:4}) by
\[
\begin{aligned}
	\T(X,\Theta,\Omega):
	&= \left \{ (X',\Theta',\Omega') \right. &|&\left. (X',-\Theta',-\Omega')\in \partial \L(X;\Theta,\Omega)\right \} \\[2mm]
	&=\left \{  (X',\Theta',\Omega') \right. & |&\left.X'=-\Theta+\Omega, \:\Omega'\in \partial \P(\Omega)-X,\right.  \\
	& &   &\left.(\Theta')^k=-(\Theta^k)^{-1}+X^k+S^k,\:k=1,\dots,K \right \}.
\end{aligned}
\]
Let $\Q_t=\Diag(\tau_t \I, \I, \I)$ be a block diagonal matrix. Thus, each iteration of Algorithm~\ref{alg:dg1} can  be regarded as the following approximate rule:
\begin{equation*}(X^{(t+1)}, \Theta^{(t+1)}, \Omega^{(t+1)})\approx (\Q_t+\sigma_t\T)^{-1}\Q_t(X^{(t)}, \Theta^{(t)}, \Omega^{(t)}),
	\end{equation*}
which places it within the framework of the preconditional proximal point algorithm. The convergence properties can therefore be analyzed using the results in \cite[Theorem 2.3 and Theorem 2.5]{li2020asymptotically}.

Since the function $h(\cdot)$ is strictly convex, the associated Karush--Kuhn--Tucker (KKT) system admits a unique solution, denoted by $(\bar{X},\bar{\Theta},\bar{\Omega})$, and thus satisfies $\T^{-1}(0)=\left \{(\bar{X},\bar{\Theta},\bar{\Omega})\right \}$.
Note that the regularization $\P(\cdot)$, defined in (\ref{dg:12}),  is a positive homogeneous function. 
According to \cite[Example 11.4(a)]{Rockafellar1998VariationalA}, its conjugate function $\P^*(\cdot)$ is therefore the indicator function of a nonempty convex polyhedral set.
Furthermore,  it follows  from \cite[Theorem 2.7]{li2018highly}, \cite[Proposition 6]{cui2019r}, and \cite[Lemma 2]{li2020asymptotically} that the inverse operator $\T^{-1}$, evaluated at the origin,  satisfies the following error bound property: 
for given $r>0$, there exists a constant $\kappa>0 $, such that for any $(X,\Theta, \Omega)\in\Y \times\Y\times \Y$ satisfying $\left \| (X,\Theta,\Omega)-(\bar{X},\bar{\Theta},\bar{\Omega}) \right \| \le r$, we have
\begin{equation}\label{dg:19}
	\left \| (X,\Theta, \Omega)-(\bar{X},\bar{\Theta},\bar{\Omega})\right \|\le \kappa \dist(0, \T(X,\Theta,\Omega )).
\end{equation}

Given positive summable sequences $\left \{ \varepsilon _t \right \} $ and $\left \{ \delta _t \right \}  $ with each $\delta_t<1$, we consider the following stopping criteria  adopted in 
 \cite{li2020asymptotically}, which are implementable conditions based on the general principles in 
 \cite{rockafellar1976monotone}:
 	\begin{align*}
 		\left \|  \nabla \Gamma _t(X^{(t+1)}) \right \|  &\le \frac{\min\{\sqrt{\tau_t},1\}}{\sigma_t}\varepsilon _t; \tag{A} \\ 
 		 \left \|  \nabla \Gamma _t(X^{(t+1)}) \right \| 
 		& \le  \frac{\delta _t\min\{\sqrt{\tau_t},1\}}{\sigma_t} \left \| (X^{(t+1)},\Theta^{(t+1)},\Omega^{(t+1)})- (X^{(t)},\Theta^{(t)},\Omega^{(t)}) \right \|_{\Q_t}. \tag{B}
 	\end{align*}

Combining the above analysis with the convergence results developed in \cite[Theorem 2.3 and Theorem 2.5]{li2020asymptotically}, we present the global convergence and the local asymptotically superlinear convergence rate of Algorithm \ref{alg:dg1}, as stated in the next theorem. 

\begin{theorem}\label{th:1}
	Suppose that $\tau _t\downarrow \tau _{\infty}>0$ and $\sigma_t\uparrow\sigma_{\infty}\le \infty$.
	\begin{itemize}
		\item [(i)]\label{item:i} Let $\left \{ ( X^{(t)},\Theta^{(t)},\Omega^{(t)})\right \} $ be the sequence generated by Algorithm \ref{alg:dg1} under the stopping criterion (A). Then, the sequence $\left \{ ( X^{(t)},\Theta^{(t)},\Omega^{(t)})\right \} $ is bounded. Moreover, 
        $\left \{   X^{(t)} \right \} $ and $\left \{   \Theta^{(t)} \right \} $ converge to optimal solutions of the dual problem (\ref{dg:7}) and the primal problem(\ref{dg:3}), respectively.
		\item[(ii)]\label{item:ii} Let $\left \{ ( X^{(t)},\Theta^{(t)},\Omega^{(t)})\right \} $ be the sequence generated by Algorithm \ref{alg:dg1} under     stopping criteria (A) and (B). Assume that the initial $(X^{(0)},\Theta^{(0)},\Omega^{(0)})$ satisfies $\left \|( X^{(0)},\Theta^{(0)},\Omega^{(0)})\right. \allowbreak\left.  -\allowbreak(\bar{X},\bar{\Theta},\bar{\Omega}))\right \|_{\Q_0}\allowbreak\le r-\sum_{i=0}^{\infty } \varepsilon  _i$. Let $\kappa$ be the modulus given in (\ref{dg:19}). Then, for all $t\ge 0$, the following inequality holds:
		\begin{equation*}\small 
			\begin{aligned}
		 \left \| (X^{(t+1)},\Theta^{(t+1)},\Omega^{(t+1)}) -(\bar{X},\bar{\Theta},\bar{\Omega})\right \|_{\Q_t} 
		\le \ \zeta_t  \left \| (X^{(t )},\Theta^{(t )},\Omega^{(t )}) -(\bar{X},\bar{\Theta},\bar{\Omega})\right \|_{\Q_t},
		\end{aligned}
		\end{equation*}
		where 
		\begin{equation*}
			\zeta_t:=\frac{1}{1-\delta_t} \left ( \delta_t+\frac{\left ( 1+\delta _t \right )\kappa\gamma_t }{\sqrt[]{\sigma_t^2+\kappa^2\gamma_t^2} }  \right )\longrightarrow \zeta_{\infty }:=\frac{\kappa\gamma_{\infty }}{\sqrt[]{\sigma_{\infty}^2+\kappa^2\gamma_{\infty }^2} }<1\:\:\:as\:t\longrightarrow \infty 
			\end{equation*}
		with $\gamma_t:=\max(\tau_t,1)$ and $ \gamma_{\infty}:=\max(\tau_{\infty},1)$.
		
	\end{itemize}
\end{theorem}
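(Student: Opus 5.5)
The proof of Theorem \ref{th:1} is a reduction to the abstract results \cite[Theorem 2.3 and Theorem 2.5]{li2020asymptotically} for the preconditioned proximal point algorithm (PPA). The work is to verify that Algorithm \ref{alg:dg1} fits that framework and that the stopping criteria translate correctly. First, I will show that each exact iteration of Algorithm \ref{alg:dg1} equals $(\Q_t+\sigma_t\T)^{-1}\Q_t(X^{(t)},\Theta^{(t)},\Omega^{(t)})$. Writing out the optimality condition $\nabla\Gamma_t(X^{(t+1)})=0$, one uses $\nabla\Psi_f=\I-\prox_f$ and the explicit form of $\nabla_X\L_{\sigma}$ given above. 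With the multiplier updates of Step 2, the Moreau identity then gives $\Omega^{(t+1)}-\Omega^{(t)}\in\sigma_t(\partial\P(\Omega^{(t+1)})-X^{(t+1)})$. The log-determinant term gives $\Theta^{(t+1)}-\Theta^{(t)}=\sigma_t\bigl((\Theta^{(t+1)})^{-1}-X^{(t+1)}-S\bigr)$ blockwise, and the $X$-part reads $\tau_t(X^{(t+1)}-X^{(t)})=\sigma_t(\Theta^{(t+1)}-\Omega^{(t+1)})$. Together these say exactly $0\in \Q_t(Z^{(t+1)}-Z^{(t)})+\sigma_t\T(Z^{(t+1)})$. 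Maximal monotonicity of $\T$ follows because $\L$ is a closed proper convex-concave function (Rockafellar \cite{rockafellar1976augmented}).

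Second, for the inexact case I will bound the residual. If $\nabla\Gamma_t(X^{(t+1)})=\epsilon$, the same computation shows $-\epsilon\cdot(\text{const})$ appears only in the $X$-component. Hence the distance of $Z^{(t+1)}$ to the exact PPA step $J_t(Z^{(t)})$ in the $\Q_t$-norm is at most $\sigma_t\|\epsilon\|/\min\{\sqrt{\tau_t},1\}$. This uses the standard nonexpansiveness of $(\Q_t+\sigma_t\T)^{-1}\Q_t$ in $\|\cdot\|_{\Q_t}$. This is precisely why criteria (A) and (B) carry the factor $\min\{\sqrt{\tau_t},1\}/\sigma_t$. They therefore become the summable-error criterion $\|Z^{(t+1)}-J_t(Z^{(t)})\|_{\Q_t}\le\varepsilon_t$ and the relative criterion $\le\delta_t\|Z^{(t+1)}-Z^{(t)}\|_{\Q_t}$ required in \cite{li2020asymptotically}.

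Third, for part (i) I need $\T^{-1}(0)\ne\emptyset$. This follows from the existence and uniqueness of the KKT point $(\bar X,\bar\Theta,\bar\Omega)$: $L+\P$ is coercive on $\Y_{++}$ because $S^k\succeq0$ and the diagonal terms plus the log-det barrier keep the level sets bounded, and $h$ is strictly convex. Together with $\tau_t\downarrow\tau_\infty>0$, which keeps $\Q_t$ uniformly positive definite and nonincreasing, Theorem 2.3 of \cite{li2020asymptotically} gives boundedness of $\{Z^{(t)}\}$ and convergence to a point of $\T^{-1}(0)$. The $X$- and $\Theta$-components of a zero of $\T$ are dual and primal optimal, respectively.

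For part (ii), the error bound (\ref{dg:19}) holds on the ball of radius $r$. I will argue that it holds at the points encountered by the iterates, since the initial condition $\|Z^{(0)}-\bar Z\|_{\Q_0}\le r-\sum\varepsilon_i$ and quasi-Fejér monotonicity keep all iterates and their exact proximal images within distance $r$ (using $\Q_t\succeq \min\{\tau_\infty,1\}\I$, with some care in converting norms, which is where $\gamma_t=\max(\tau_t,1)$ enters). Theorem 2.5 of \cite{li2020asymptotically} then yields the stated contraction with factor $\zeta_t$ and limit $\zeta_\infty<1$. I expect the main obstacle to be the first two steps: establishing the exact correspondence between the inexact $X$-subproblem gradient and the PPA residual in the weighted norm. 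The error bound itself is taken as given from \cite{li2018highly,cui2019r}, since $\P^*$ is a polyhedral indicator and $h$ is locally strongly convex on bounded sets.
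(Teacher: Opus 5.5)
Your proposal is correct and follows the same route as the paper: recast Algorithm~\ref{alg:dg1} as the preconditioned PPA step $(\Q_t+\sigma_t\T)^{-1}\Q_t$, convert (A) and (B) into the PPA inexactness criteria through the $X$-component residual, and invoke \cite[Theorems 2.3 and 2.5]{li2020asymptotically} with the error bound (\ref{dg:19}), supplying verifications the paper leaves implicit. Two small corrections: the proximal optimality condition gives $\Omega^{(t)}-\Omega^{(t+1)}\in\sigma_t(\partial\P(\Omega^{(t+1)})-X^{(t+1)})$, so your sign is reversed relative to your $\Theta$- and $X$-components; and existence of the KKT point requires $S^k_{ii}>0$ for all $i,k$, not merely $S^k\succeq 0$, because $\P$ does not penalize diagonal entries, so if $S^k_{ii}=0$ the objective tends to $-\infty$ along $\Theta^k=\I+s\,e_ie_i^{\top}$ as $s\to\infty$.
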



\subsection{A Semismooth Newton Algorithm for the Subproblem (\ref{dg:5})}
Although Algorithm \ref{alg:dg1} exhibits favorable convergence properties, its practical performance is largely determined by the efficiency of solving subproblem (\ref{dg:5}) at each iteration. Therefore, we develop a semismooth Newton method to handle this subproblem (\ref{dg:5}) efficiently.

Notice that $\Gamma_t(\cdot)$, defined in (\ref{dg:5}), is a strongly convex and continuously differentiable function, then the unique optimal solution to problem (\ref{dg:5}) can be computed via the following nonlinear equations:
\begin{equation}\label{dg:8}
	\begin{aligned}
		\nabla \Gamma  _t(X)
		&=-\left ( \prox_{\sigma_t h} ((\Theta^1)^{(t)}-\sigma_t(X^1+S^1),\dots,\prox_{\sigma_t h}((\Theta^K)^{(t)}-\sigma_t(X^K+S^K))\right ) \\
		& \ \ +\prox_{\sigma_t \P}( \Omega^{(t)}+\sigma _tX)+\frac{\tau_t}{\sigma_t  }( X-X^{(t)} )\\
		&=0.
	\end{aligned}
\end{equation}
The function $\nabla \Gamma_t(\cdot)$ is not differentiable due to the nonsmooth proximal operator $\prox_{\sigma_t \mathcal{P}}(\cdot)$. Nevertheless, $\nabla \Gamma_t(\cdot)$ is $\gamma$-order semismooth on $\mathcal{Y}$ for any $\gamma>0$. Hence, we can apply a semismooth Newton method to solve (\ref{dg:8}). For this purpose, we define the following surrogate generalized Jacobian $\hat{\partial}^2 \Gamma  _t(X)$ of $\nabla \Gamma_t(\cdot)$ at $X$:
\begin{equation*}
	\begin{aligned}
		 \hat{\partial}^2 \Gamma  _t(X)[D]:= & \sigma_t\left ( (\phi_{\sigma_t}^+)' \left((\Theta^1)^{(t)}-\sigma_t(X^1+S^1)\right)[D^1],\dots,\right. \\[2mm]
		 &\left. (\phi_{\sigma_t}^+)' \left((\Theta^K)^{(t)}-\sigma_t(X^K+S^K)\right)[D^K]\right ) 
		 +\sigma_t\V[D]+\frac{\tau_t}{\sigma_t  }D,	\ \forall D\in\Y,
	\end{aligned}
\end{equation*}
where $\V\in\hat{\partial}\prox_{\sigma_t \P}\left ( \Omega^{(t)}+\sigma _tX \right )$. 
With these preparations in place, we now present the following semismooth Newton method to solve problem (\ref{dg:5}), as summarized in Algorithm \ref{alg:dg2}.

\begin{algorithm} 	
	\renewcommand{\thealgorithm}{1}
	\caption{A Semismooth Newton Method for Subproblem (\ref{dg:5})}\label{alg:dg2}
	\hspace*{0.01in} \raggedright {\bf Input:} $\bar{\mu}\in(0,1/2)$, $\bar{\tau}\in(0,1]$, $\bar{\gamma}$, $\bar{\delta}\in(0,1)$, $X^{(t,0)}\in\Y_{++}$. Set $m=0$.\\
	\begin{algorithmic}[1]
		\State {Choose  $(\V^k)^{(t)}\in\hat{\partial}\prox_{\sigma_t h} \left((\Theta^k)^{(t)}-\sigma_t((X^k)^{(t,m)}+S^k)\right)$, $k=1,\dots,K$, and $\W^{(t)}\in\allowbreak\hat{\partial}\prox_{\sigma_t \P}\left ( \Omega^{(t)}+\sigma _tX^{(t,m)} \right )$. Let $U^{(t)}:=\sigma_t((\V^1)^{(t)},\dots,(\V^K)^{(t)})+\sigma_t\W^{(t)}+\frac{\tau_t}{\sigma_t  }\I$. Solving the following linear system
		\begin{equation*}
			U^{(t)}[D]=-\nabla \Gamma  _t(X^{(t,m)})
		\end{equation*}
		by the conjugate gradate (CG) algorithm to find $D^{(m)}$ such that $$\left \|U^{(t)}[D^{(m)}]+\nabla \Gamma  _t(X^{(t,m)})  \right \| \le\min(\bar{\gamma}, \left \| \nabla \Gamma  _t(X^{(t,m)}) \right \|^{1+\bar{\tau}}  ). $$}
		
		\State{Set $\alpha_m=\bar{\delta}^{n_m}$, where $n_m$ is the first nonnegative integer $n$ for which
		\begin{equation*}
			\Gamma  _t(X^{(t,m)}+\bar{\delta}^{n}D^{(m)})\le	\Gamma  _t(X^{(t,m)})+\bar{\mu}\bar{\delta}^{n}\left \langle  \nabla \Gamma  _t(X^{(t,m)}), D^{(m)}\right \rangle .
		\end{equation*} }
		
		\State Update  $X^{(t,m+1)}=X^{(t,m)}+\alpha_mD^{(m)}$, $m\leftarrow m+1$, and go to step 1.
	\end{algorithmic}
\end{algorithm}		

\begin{remark}
In practical implementations, we set $\bar{\gamma} = 1$ when $t < 2$ (the main proximal ALM iteration) and $m < 5$ (the semismooth Newton iteration for solving the subproblem); otherwise, we set $\bar{\gamma} = 10^{-1}$. 
The parameter $\bar{\tau}$ is fixed at $10^{-1}$. 
In the line search procedure, we choose $\bar{\mu} = 10^{-4}$ and $\bar{\delta} = 0.5$.
\end{remark}

The global convergence and the local convergence rate of the semismooth Newton method are extensively studied in the existing literature \cite[Proposition 3.3 and Theorem 3.4]{zhao2010newton} and  
 \cite[Theorem 3]{li2018efficiently}. For completeness, the main result is repeated in the following.
\begin{theorem}\label{Th:3}
	Let $\{X^{(t)}\}$ be the sequence generated by Algorithm \ref{alg:dg2}. Then, $\{X^{(t)}\}$ converges to the unique optimal solution $\hat{X}$ of subproblem (\ref{dg:8}). In addition, for all $t$ sufficiently large, 
	\begin{equation}\label{dg:28}
		\left \| X^{(t+1)}-\hat{X}  \right \| =\mathcal{O} \left(\left \| X^{(t)}-\hat{X} \right \|^{1+\bar{\tau } } \right),
	\end{equation}
	where $\bar{\tau}\in(0,1]$ is the constant used in Algorithm \ref{alg:dg2}.
\end{theorem}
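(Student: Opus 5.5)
The plan is to place the inner problem (\ref{dg:5}) inside the standard globalized semismooth Newton framework of \cite{zhao2010newton,li2018efficiently}. That requires three ingredients: a descent property with Armijo line search, uniform positive definiteness of the linear operators $U^{(t)}$, and strong semismoothness of $\nabla\Gamma_t$ with respect to the surrogate Jacobian $\hat{\partial}^2\Gamma_t$.

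\textbf{Step 1 (well-posedness and descent).} Since $\Gamma_t$ is continuously differentiable and strongly convex with modulus $\tau_t/\sigma_t$, it has bounded level sets and a unique minimizer $\hat X$, which is characterized by (\ref{dg:8}). The components of $U^{(t)}$ are as follows:
\begin{itemize}
\item each $(\phi^+_{\sigma_t})'(\cdot)$ is self-adjoint and positive semidefinite, because $\Gamma_{ij}>0$ in the Proposition on the log-determinant function;
\item each $\W\in\hat{\partial}\prox_{\sigma_t\P}$ is self-adjoint and positive semidefinite by the Theorem following (\ref{dg:26}).
\end{itemize}
Hence $U^{(t)}\succeq (\tau_t/\sigma_t)\I$. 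Moreover, $\|U^{(t)}\|$ is uniformly bounded on the level set, since all these operators are bounded by $1$ in norm. From the inexact CG condition we get $\langle\nabla\Gamma_t(X^{(t,m)}),D^{(m)}\rangle\le -c\|\nabla\Gamma_t(X^{(t,m)})\|^2$ for some $c>0$ once $\bar\gamma$ is suitably small relative to $\tau_t/\sigma_t$; when $\bar\gamma$ is not small, I would fall back on the relative term $\|\nabla\Gamma_t\|^{1+\bar\tau}$ near convergence. Therefore $D^{(m)}$ is a descent direction, and the Armijo step is finite.

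\textbf{Step 2 (global convergence).} By the Lipschitz continuity of $\nabla\Gamma_t$, the Armijo step sizes are bounded below. Summing the sufficient-decrease inequalities gives $\sum_m\|\nabla\Gamma_t(X^{(t,m)})\|^2<\infty$. Since the iterates stay in a compact level set and $\hat X$ is the unique stationary point, the whole sequence converges to $\hat X$.

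\textbf{Step 3 (local rate).} This is the step I expect to be the main obstacle. We need
\[
\nabla\Gamma_t(X)-\nabla\Gamma_t(\hat X)-U[X-\hat X]=\O(\|X-\hat X\|^{2}) \quad\text{for all } U\in\hat{\partial}^2\Gamma_t(X).
\]
The smooth part $\phi^+_{\sigma_t}$ is spectral. Its scalar function is analytic, so it is twice continuously differentiable and the estimate follows from Taylor expansion. For the nonsmooth part, the Theorem after (\ref{dg:26}) gives an exact identity in a neighborhood, because $\prox_{\P}$ is piecewise affine; applying it at $\Omega^{(t)}+\sigma_tX$ shows that this part contributes zero error. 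Combining the two gives strong semismoothness relative to $\hat{\partial}^2\Gamma_t$.

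With this estimate, the uniform bound $\|U^{-1}\|\le\sigma_t/\tau_t$, and the CG residual $\O(\|\nabla\Gamma_t\|^{1+\bar\tau})=\O(\|X-\hat X\|^{1+\bar\tau})$, the full Newton step satisfies the bound (\ref{dg:28}). It remains to show that the unit step is eventually accepted by the Armijo test with $\bar\mu<1/2$. For this I would use the standard argument of \cite[Theorem 3.4]{zhao2010newton}: expand $\Gamma_t$ to second order along $D^{(m)}$ and use the semismoothness estimate together with the positive definiteness of $U^{(t)}$. Then $\alpha_m=1$ for all large $m$, which yields (\ref{dg:28}).
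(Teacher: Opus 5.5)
Your overall route is the paper's own. The paper gives no argument beyond invoking \cite[Proposition 3.3 and Theorem 3.4]{zhao2010newton} and \cite[Theorem 3]{li2018efficiently}, and your Steps 2 and 3 reproduce those standard arguments.

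There is, however, a genuine gap in Step 1: descent does \emph{not} follow from the CG residual test. Write $g=\nabla\Gamma_t(X^{(t,m)})$, $U=U^{(t)}$ and $r=UD^{(m)}+g$. The best available bound is $\langle g,D^{(m)}\rangle\le-\|g\|^2/\lambda_{\max}(U)+\|g\|\|r\|/\lambda_{\min}(U)$, which is negative only if $\|r\|<\|g\|\lambda_{\min}(U)/\lambda_{\max}(U)$. Here $\lambda_{\min}(U)$ is only bounded below by $\tau_t/\sigma_t$, which is as small as $10^{-12}$ under the paper's parameter rule, while $\lambda_{\max}(U)$ can be of order $\sigma_t$. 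So for a fixed $\bar\gamma\in(0,1)$ the test $\|r\|\le\min(\bar\gamma,\|g\|^{1+\bar\tau})$ admits ascent directions. For example, $U=\Diag(\epsilon,1)$, $g=(b,a)$, $r=(2b,0)$ gives $\langle g,D\rangle=b^2/\epsilon-a^2>0$ for small $\epsilon$, although $\|r\|=2b$ is tiny. Your ``near convergence'' fallback needs $\|g\|^{\bar\tau}<\lambda_{\min}(U)/\lambda_{\max}(U)$, so it says nothing about the early iterates where global convergence is decided. Step 2 inherits the gap, since it needs a uniform $c$ with $\langle g,D^{(m)}\rangle\le-c\|g\|^2$ and a bound $\|D^{(m)}\|\le C\|g\|$.

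The missing idea is that $D^{(m)}$ is a CG iterate started from $0$, not an arbitrary vector passing the residual test.
\begin{itemize}
\item The $k$-th CG iterate $d_k$ minimizes $q(d)=\frac12\langle d,Ud\rangle+\langle g,d\rangle$ over the Krylov space $\mathcal{K}_k\ni d_k$. Hence $Ud_k+g\perp d_k$, that is, $\langle g,d_k\rangle=-\langle d_k,Ud_k\rangle$.
\item The value $q(d_k)=-\frac12\langle d_k,Ud_k\rangle$ is nonincreasing in $k$, and $d_1=-(\|g\|^2/\langle g,Ug\rangle)g$.
\item Together these give $\langle g,d_k\rangle\le-\|g\|^2/\lambda_{\max}(U)$ and $\|d_k\|\le\|g\|/\lambda_{\min}(U)$ for every $k\ge1$, however loose the stopping tolerance. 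When $g\neq0$, $\bar\gamma<1$ forces at least one CG step.
\end{itemize}
This is what underlies \cite[Proposition 3.3]{zhao2010newton}. Combined with $\tau_t/\sigma_t\le\lambda_{\min}(U)\le\lambda_{\max}(U)\le2\sigma_t+\tau_t/\sigma_t$, it supplies the uniform constants your Steps 2 and 3 need.

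A smaller point in Step 3: the identity in the theorem following (\ref{dg:26}) takes $\W$ at the base point, whereas your $U$ is evaluated at the iterate. Take the base point at $\bar Y:=\Omega^{(t)}+\sigma_t\hat X$. Then use the local inclusion $\hat\partial\prox_{\sigma_t\P}(Y)\subseteq\hat\partial\prox_{\sigma_t\P}(\bar Y)$ for $Y$ near $\bar Y$, which follows from upper semicontinuity because these surrogate Jacobians take finitely many values. A neighborhood of the moving point $\Omega^{(t)}+\sigma_tX$ need not contain the solution, so it cannot be used instead.
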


In essence, for sufficiently large $t$, (\ref{dg:28}) implies the existence of a neighborhood around the solution $\hat{X}$ such that, once the iterates enter this region, they converge to the solution at a superlinear rate of order $1+\bar{\tau}$. In practice, only a few additional iterations are typically required to achieve high-precision solutions. Moreover, the global convergence established in Theorem \ref{Th:3} ensures that the generated sequence will eventually reach this neighborhood, thus exhibiting rapid local convergence.

\section{Experimental results}\label{exp}
In this section, we present numerical results that compare the performance of our proposed algorithm, {\sc Ssnpal},  with two existing methods: ADMM and the proximal Newton-type method proposed in \cite{yang2015fused}, referred to as MGL. All algorithms are implemented in MATLAB (version 9.10, R2021a) and executed on a Linux server (32-core, Intel Xeon Gold 6326 @ 2.90GHz, 251 GB of RAM).

\subsection{Implementation Details of Benchmark Methods}
In this subsection, we  provide a brief overview of the benchmark algorithms, including ADMM and MGL, used for comparison with our proposed method.
\subsubsection{ADMM}
ADMM is used to solve the dual problem (\ref{dg:7}), which can be written as:
\begin{equation}\label{dg:21}
\begin{aligned}
	\underset{X,Y}{\min}  &\:\: -\sum_{k=1}^{K}\log\det(Y^k) +\P^{*}(X)\\
\text{s.t.} &\:\:X+S=Y.
\end{aligned}
\end{equation}

For $\sigma>0$, the augmented Lagrangian function associated with (\ref{dg:21}) is defined by
\begin{equation*}
\hat{\L}_{\sigma } (X,Y;\Theta)=-\sum_{k=1}^{K}\log\det(Y^k)+\P^*(X)-\left \langle \Theta,X+S-Y \right \rangle +\frac{\sigma }{2}\left \| X+S-Y \right \| ^2.
\end{equation*}
The ADMM iterative scheme for (\ref{dg:21}) is as follows:  given $\tau\in(0,(1+\sqrt{5})/2)$ and an initial point $(X^{(0)},Y^{(0)},\Theta^{(0)})\in\Y_{++}\times\Y_{++}\times\Y_{++}$, the $t$-th iteration is given by
\begin{equation*}
\left\{\begin{matrix}
	X^{(t+1)}=\underset{X}{\arg\min}\hat{\L}_{\sigma }(X,Y^{(t)};\Theta^{(t)}), \:\:\:\:\:\:\:\:\:\:\:\:\:\:\:\:\\[2mm]
	Y^{(t+1)}=\underset{Y}{\arg\min}\hat{\L}_{\sigma }(X^{(t+1)},Y;\Theta^{(t)}),\:\:\:\:\:\:\:\:\:\:\: \\[2mm]
	\Theta^{(t+1)}=\Theta^{(t)}-\tau \sigma (X^{(t+1)}+S-Y^{(t+1)}).
\end{matrix}\right.
\end{equation*}
Here, $X^{(t+1)}$ is updated by $X^{(t+1)}=Y^{(t)}+\Theta^{(t)}/\sigma-S-\prox_{\P}(Y^{(t)}+\Theta^{(t)}/\sigma-S)$ and $Y^{(t+1)}=((Y^1)^{(t+1)},\dots,(Y^K)^{(t+1)})$ is updated by $(Y^k)^{(t+1)}=\phi_{\frac{1}{\sigma}}^{+}((X^k)^{(t+1)}+S^k-(\Theta^k)^{(t)})/\sigma$, $k=1,\dots,K.$


 
Different from the ADMM implemented by Yang et al. \cite{yang2015fused}, which uses a fixed penalty parameter $\sigma$ and step length $\tau=1$,  we set the step length $\tau = 1.618$ and adjust the parameter $\sigma$ based on the ratio $\gamma^t$  of primal 
to dual  feasibilities at iteration $t$ \cite[Section 4.4]{lam2018fast}. Specifically, we set $\sigma^{t+1}=1.2\sigma^t$  if $\gamma^t>5$,  $\sigma^{t+1}=\frac{\sigma^t}{1.2}$  if $\frac{1}{\gamma^t}>5$, and leave $\sigma$ unchanged otherwise.
The larger step length and the adaptive update of $\sigma$ can empirically accelerate the convergence speed.

\subsubsection{Implementation of MGL}
In \cite{yang2015fused}, Yang et al. proposed an active set identification strategy for MGL  to determine the subset of variables to be updated during each Newton iteration. However, it is important to note that the strategy described in 
 \cite[Lemma 4.2]{yang2015fused} was originally designed for the fused regularization form. In this paper, we establish a similar identification result for the clustered regularization, and the proof is omitted for brevity.

\begin{theorem}\label{lem2}
In the $t$-th iteration, define the active set $\B$ by
	\begin{equation*}
		\begin{aligned}
			\B=&\left \{ (i,j)|\left ( \Theta_{[ij]} \right )^{(t)} =\mathbf{0}, \:\: \left | \sum_{k\in \A }^{} \left [S^k_{ij}-((\Theta^k)^{(t)})^{-1}_{ij}\right ]  \right |\le\left | \A \right |\mu_1+\left | \A \right |(K-\left | \A \right |) \mu_2, \right. \\[2mm]
			&\:\:\:\left. \forall\; \emptyset \ne \A \subseteq \left \{ 1,\dots,K \right \}  \right \}.	
			\end{aligned}
	\end{equation*}
	Then, $\Delta^* =0$ is the optimal solution to the following problem:
	\begin{equation*}
		\begin{aligned}
			\underset{\Delta }{\min}  &\:\: \tilde{L}_t(\Theta^{(t)}+\Delta ) +\P(\Theta^{(t)}+\Delta ),\\
			\st &\:\:(\Delta ^1)_{ij}=\cdots=(\Delta ^K)_{ij}=0,\ (i,j)\in \F,
		\end{aligned}
	\end{equation*}
	where  $\tilde{L}_t(\cdot)$ is the quadratic approximation of $L(\cdot)$ at the current iterate point and $\F$ is the complement of $\B$.
\end{theorem}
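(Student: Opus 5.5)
The plan is to verify the first-order optimality conditions of the restricted subproblem at $\Delta=0$, using Lemma \ref{le:1} to produce the required subgradients entrywise on $\B$. The restricted problem is convex: $\tilde{L}_t$ is a convex quadratic, $\P$ is convex, and the constraint set is a linear subspace. Optimality of $\Delta^*=0$ is therefore equivalent to $0\in\nabla\tilde{L}_t(\Theta^{(t)})+\partial\P(\Theta^{(t)})+N$. Here $N$ is the normal cone of the subspace, i.e. the set of elements supported on the positions $(i,j)\in\F$ (all $K$ slices). Since $\tilde{L}_t$ is the second-order Taylor model of $L$ at $\Theta^{(t)}$, its gradient at $\Delta=0$ equals $\nabla L(\Theta^{(t)})=(S^k-((\Theta^k)^{(t)})^{-1})_{k=1}^K$. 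The positions in $\F$, together with the diagonal (which is not penalized and must therefore also lie in $\F$), are absorbed by $N$. Consequently it suffices to exhibit, for each $(i,j)\in\B$, an element of $\partial\phi((\Theta_{[ij]})^{(t)})$ equal to $-G_{[ij]}$, where $G:=\nabla L(\Theta^{(t)})$.

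Fix $(i,j)\in\B$. By definition $(\Theta_{[ij]})^{(t)}=\mathbf{0}$. Exactly as in the optimality system (\ref{dg:17}), the subdifferential of $\phi$ at $\mathbf{0}$ is $\{\mu_1\gamma+\mu_2C^\top\nu : \|\gamma\|_\infty\le1,\ \|\nu\|_\infty\le1\}$, with $C$ the pairwise-difference matrix of Lemma \ref{le:2} taken with $n=K$. The requirement $-G_{[ij]}\in\partial\phi(\mathbf{0})$ thus becomes solvability of the linear system (\ref{dg:23}) with $x=G_{[ij]}$. Lemma \ref{le:1} says this holds if and only if
\[
\Bigl|\sum_{k\in\A}G^k_{ij}\Bigr|\le|\A|\mu_1+|\A|(K-|\A|)\mu_2
\]
for every nonempty $\A\subseteq\{1,\dots,K\}$. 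This is precisely the second condition in the definition of $\B$, so the subgradient exists.

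Assembling these pieces, symmetry is handled by choosing the same $(\gamma,\nu)$ for $(j,i)$ as for $(i,j)$, matching the structure of $\P(\Theta)=\sum_{i\ne j}\phi(\Theta_{[ij]})$. The resulting element of $\partial\P(\Theta^{(t)})$ cancels $G$ on $\B$, and the residual on $\F$ lies in $N$. Hence $0$ belongs to the subdifferential of the restricted objective at $\Delta=0$, so $\Delta^*=0$ is optimal.

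The main obstacle is bookkeeping rather than analysis. One point to confirm is that the diagonal entries belong to $\F$, since $\P$ is not applied to them. Another is the symmetric double counting in $\sum_{i\ne j}$, which may introduce a factor of $2$ in both gradient and subgradient; this factor cancels consistently on both sides. The substantive step is already supplied by Lemma \ref{le:1}.
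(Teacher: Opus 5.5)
Your argument is correct and is essentially the intended one. The paper omits this proof, but the natural route is exactly yours, and it is the same route used in the proof of Theorem \ref{Th:2}: first-order optimality of the restricted convex problem at $\Delta=0$, combined with Lemma \ref{le:1} applied to $x=\bigl(S^k_{ij}-((\Theta^k)^{(t)})^{-1}_{ij}\bigr)_{k=1}^K$ for each $(i,j)\in\B$.

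The diagonal point you leave to confirm holds, but not because $\P$ ignores the diagonal. The iterates $(\Theta^k)^{(t)}$ are positive definite, so $(\Theta_{[ii]})^{(t)}\neq\mathbf{0}$. Hence every diagonal pair violates the first condition defining $\B$ and lies in $\F$. Also, strict convexity of $\tilde{L}_t$ makes $\Delta^*=0$ the unique minimizer.
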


Theorem \ref{lem2} implies that when the variables in
the free set $\F$ are fixed, no update is needed for the variables in the active set $\B$. The remaining iterative details of 
MGL with active set identification used for solving the cMGGMs can be found in 
 \cite[Algorithm1]{yang2015fused}.

\subsection{Experimental settings} 
We  present several key implementation details of all tested algorithms,  which are crucial for ensuring numerical stability and achieving efficient convergence in practical applications.

\subsubsection{Stopping criteria}
For   problem (\ref{dg:3}), the accuracy
of the approximate optimal solution is measured  as below. 
Let $\tol =10^{-6}$ be a given tolerance  in the following experiments. 
\begin{itemize}
	\item The {\sc Ssnpal} is terminated if $\eta_S<\tol$, where 
	\begin{equation*}
		\begin{aligned}
			\eta_S:=\max &\left\{  \frac{\left \|   \Omega-\prox_{\P}(X+\Omega)\right \|}{1+\left \| \Omega \right \| },\;\;\max_{1\le k\le K} \left \{  \frac{\left \|\Theta^k(X^k+S^k)-\I\right \|}{1+\sqrt{p} } \right \} , \right.\\[2mm]
			&\:\:\:\left. \frac{\left \| \Theta-\Omega \right \| }{1+\left \| \Theta \right \| },\;\;\frac{\text{pobj}_{\text{S}}-\text{dobj}_{\text{S}}}{1+|\text{pobj}_{\text{S}}|+|\text{dobj}_{\text{S}}|} \right\}.
			\end{aligned}				
	\end{equation*}
	Here, $\text{pobj}_{\text{S}}$ and $\text{dobj}_{\text{S}}$ denote the primal and dual objective values obtained by {\sc Ssnpal}.
	\item The ADMM is terminated when $\eta_A<\tol$, where
	\begin{equation*}
		\begin{aligned}
	\eta_A:=\max&\left\{ \frac{\left \|   \Theta-\prox_{\P}(X+\Theta)\right \|}{1+\left \| \Theta \right \| },\;\;\max_{1\le k\le K} \left \{  \frac{\left \|\Theta^k Y^k-\I\right \|}{1+\sqrt{p} } \right \},\right.\\[2mm]
	&\:\:\:\left.\frac{\left \| X+S-Y \right \| }{1+\left \| S \right \| },\;\;\frac{\text{pobj}_{\text{A}}-\text{dobj}_{\text{A}}}{1+|\text{pobj}_{\text{A}}|+|\text{dobj}_{\text{A}}|} \right\}.				
		\end{aligned}				
\end{equation*}
	Here, $\text{pobj}_{\text{A}}$ and $\text{dobj}_{\text{A}}$ are the primal and dual objective values obtained by ADMM.
	\item The MGL is terminated when the relative difference between  its objective value and the primal objective value obtained by the {\sc Ssnpal} is less than the
	given tolerance $\tol$, i.e.,
	\begin{equation*}
		\eta_M:=\frac{ \text{pobj}_{\text{M}}-\text{pobj}_{\text{S}} }{1+|\text{pobj}_{\text{M}}|+|\text{pobj}_{\text{S}}|}<\tol,
	\end{equation*}
where $\text{pobj}_{\text{M}}$ denotes the objective value returned by MGL.
\end{itemize}

To ensure fairness in comparison, the maximum runtime for each algorithm is capped at one hour. 
When an algorithm does not reach the prescribed accuracy within this time limit, 
its corresponding entry in the tables is marked by ``-''.

\subsubsection{Warm-start strategy of {\sc Ssnpal}}
 To enhance the overall efficiency of the proposed algorithm, we employ a warm-start strategy during its initialization phase. Specifically, rather than initializing from arbitrary points, we first run a few iterations of the ADMM—starting from $K$ identity matrices—to generate an approximate solution with moderate accuracy. This solution is then used as the starting point for the {\sc Ssnpal}.
 Although  ADMM is well-suited for obtaining the solutions with low to moderate accuracy efficiently, it may slow down significantly when higher precision is required.       
 In contrast,  {\sc Ssnpal} exhibits favorable local convergence properties— asymptotically superlinear  convergence near the optimum—but tends to be more computationally demanding  when the iterates are still far from optimality.          Therefore, we employ  ADMM to first reduce the optimality residual below $100\tol$, or to stop early when the residual falls below $400\tol$ and the number of iterations exceeds 800.
 This strategy improves numerical stability and significantly reduces   total runtime.

\subsubsection{Model selection}
We adopt $3$-fold cross validation to select the tuning parameters $\mu_1$ and $\mu_2$ in (\ref{dg:3}), which control the sparsity and similarity among estimated graphs, respectively. Specifically, we define the turning grids $\mu_{1,\text{grid}}$ and $\mu_{2,\text{grid}}$ for $\mu_1$ and $\mu_2$, and evaluate each parameter pair using the following procedure.
 
For each fold $n = 1, \dots, 3$, let $\hat{S}_n$ denote the sample covariance matrix computed from the $n$-th segment, and let $\hat{\Theta}^{(\mu_1,\mu_2)}_{-n}$ be the inverse covariance matrix estimated from the remaining data  using  parameters $\mu_1$ and $\mu_2$. The performance of each $(\mu_1, \mu_2)$ is evaluated by the average predictive negative log-determinant:
\begin{equation*}
\text{CV}(\mu_1,\mu_2)=\sum_{n=1}^{3}\left ( -\log\det\hat{\Theta}^{(\mu_1,\mu_2)}_{-n} +\left \langle \hat{S}_{n}, \hat{\Theta}^{(\mu_1,\mu_2)}_{-n}\right \rangle  \right ).
\end{equation*}
The optimal tuning parameters $(\hat{\mu}_1, \hat{\mu}_2)$ are determined by minimizing the cross validation score defined above.

\subsection{Numerical results}
In this subsection, we compare the numerical performance of {\sc Ssnpal}, ADMM, and MGL on both synthetic and real  data sets.

\subsubsection{Simulation}

%
%
 \begin{figure}[htbp]
	\centering
	
	\begin{minipage}[t]{0.28\textwidth}
		\centering
		\includegraphics[width=\linewidth]{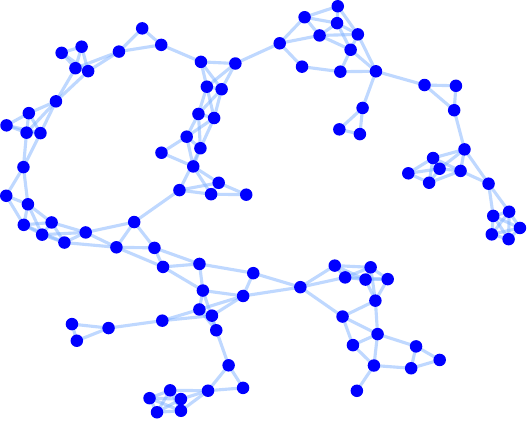} \\
		(a) Nearest-neighbor network
	\end{minipage}
	\hspace{2em}
	\begin{minipage}[t]{0.28\textwidth}
		\centering
		\includegraphics[width=\linewidth]{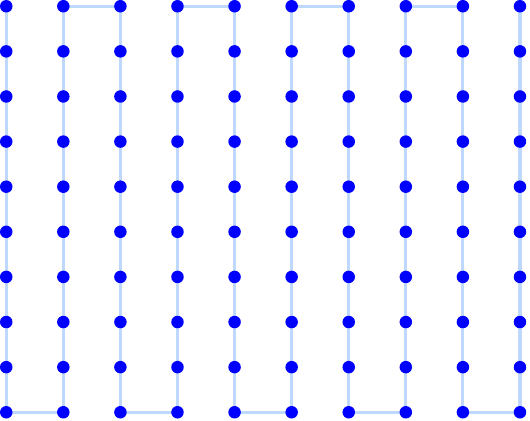}\\
		(b) Chain network
	\end{minipage}
	\hspace{2em}
	\begin{minipage}[t]{0.28\textwidth}
		\centering
		\includegraphics[width=\linewidth]{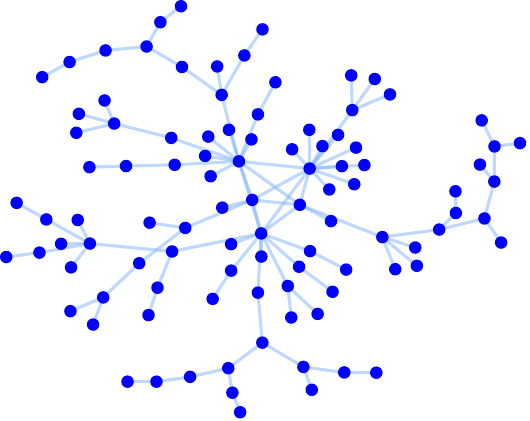}\\
		(c) Scale free network
	\end{minipage}
	
	\caption{The common links present in all categories in the three simulated networks.}
	\label{fig:three_networks}
\end{figure}
As illustrated in Figure \ref{fig:three_networks}, we generate three types of simulated networks to evaluate the performance of all tested algorithms: Case I: nearest-neighbor network; Case II: chain network; Case III: scale free network.   
 The three types of precision matrices  $\Theta^k= \left(\Sigma^{k}\right)^{-1}$, $k=1,\dots,K$, are generated  following the procedure described in \cite{guo2011joint}.
 For the case where the true precision matrices $\Theta^k$, $k=1,\dots,K$,  have a block diagonal structure with $T$ blocks, each block $(\Theta_t^1, \dots, \Theta_t^K)$, $t = 1, \dots, T$,  is generated independently using the same procedure.
 For each $k=1,\dots,K$, we generate $N=10p$ independently and identically distributed observations from a multivariate Gaussian distribution $\N(0, \Sigma^k)$.

\begin{table}[htbp]
	\footnotesize
	\caption{\footnotesize {The performance of {\sc Ssnpal}, ADMM, and MGL on three simulated networks. In the table, ``a'' = {\sc Ssnpal}, ``b'' = ADMM, and  ``c'' = MGL. Times are shown in seconds.}}\label{tab:1-1}
	\begin{tabular}{l@{\hspace{5pt}} l@{\hspace{5pt}} l@{\hspace{5pt}}l @{\hspace{5pt}}l @{\hspace{10pt}}c@{\hspace{1pt}}c@{\hspace{1pt}}c@{\hspace{1pt}}c@{\hspace{1pt}}c@{\hspace{30pt}}c@{\hspace{1pt}}c@{\hspace{1pt}}c@{\hspace{1pt}}c@{\hspace{1pt}}c } \toprule
		\multirow{2}{*}{Problem} & 
		
		\multirow{2}{*}{Case} & 
		
		\multirow{2}{*}{$\mu_1$} & 
		\multirow{2}{*}{$\mu_2$} & 
		\multirow{2}{*}{Density} & 
		
		\multicolumn{5}{c}{$R_{\text{KKT}}$} & 
		\multicolumn{5}{c}{Time} \\
		\cmidrule(r ){6-10} \cmidrule( ){11-15}  
		(p;K)& &&&&a &$|$& b &$|$&c & a&$|$&b  &$|$& c        \\

		\hline
		
		\input{Tablesimum.dat}
	\end{tabular}
	\end{table}

 \begin{sidewaystable}[htbp]
	\footnotesize
	\centering
	\renewcommand\arraystretch{1.1}
	\caption{\footnotesize {	
			The performance of {\sc Ssnpal}, ADMM, and MGL with and without screening on three simulated networks. 
			{\sc Ssnpal}-S, ADMM-S, and MGL-S are {\sc Ssnpal}, ADMM, and MGL with
			screening, respectively.
			In the table, ``a'' = {\sc Ssnpal}, ``b'' = ADMM,  ``c'' = MGL, ``d'' = {\sc Ssnpal}-S, ``e'' = ADMM-S,  and  ``f'' = MGL-S. Times are shown in seconds.
		}}
	\label{tab:2}
	\begin{tabular}{l@{\hspace{7pt}} l@{\hspace{7pt}} l@{\hspace{5pt}}l@{\hspace{7pt}} l @{\hspace{7pt}}c@{\hspace{2pt}}c@{\hspace{2pt}}c@{\hspace{2pt}}c@{\hspace{2pt}}c@{\hspace{2pt}}c@{\hspace{2pt}}c@{\hspace{2pt}}c@{\hspace{2pt}}c@{\hspace{2pt}}c@{\hspace{2pt}}c@{\hspace{10pt}}c@{\hspace{1pt}}c@{\hspace{1pt}}c@{\hspace{1pt}}c@{\hspace{1pt}}c@{\hspace{1pt}}c@{\hspace{1pt}}c@{\hspace{1pt}}c@{\hspace{1pt}}c@{\hspace{1pt}}c@{\hspace{1pt}}c}
		\hline
		\multirow{2}{*}{$(p;K;T)$} & 
		
		\multirow{2}{*}{Case} & 
		
		\multirow{2}{*}{$\mu_1$} & 
		\multirow{2}{*}{$\mu_2$} & 
		\multirow{2}{*}{Density} & 
		
		\multicolumn{11}{c}{$R_{\text{KKT}}$} & 
		\multicolumn{11}{c}{Time} \\
		\cmidrule(r ){6-16} \cmidrule( ){17-27} 
		& &&&&a &$|$ &b& $|$& c& $|$ &d &$|$ &e& $|$& f &a& $|$& b &$|$ &c &$|$& d &$|$ &e& $|$& f        \\
		
		\hline
		\input{Tablesimublock.dat}
	\end{tabular}
\end{sidewaystable}

 Table \ref{tab:1-1} reports the performance of the four algorithms, namely {\sc Ssnpal}, ADMM, and MGL, in terms of the relative KKT residuals ($R_{\text{KKT}}$) and the computational time (Time).
 	The regularization parameters $\mu_1$  and $\mu_2$ are selected via 3-fold cross validation over  the grids $\mu_{1,\text{grid}}=\{0.03,0.025,\dots,0.005\}$ and $\mu_{2,\text{grid}}=\{0.02,0.015, 0.01,0.005\}$, respectively. 
 	It can be observed that MGL generally requires substantially more computing time than the other algorithms when $R_{\text{KKT}}\le 10^{-6}$, particularly for $K=10$ and $K=20$, where the performance gap between MGL and {\sc Ssnpal} becomes more pronounced. 
 	{\sc Ssnpal} generally achieves the best computational performance  across all tested problem dimensions,  suggesting good scalability.  	
 To access the computational efficiency of the proposed screening rule,
we incorporate it into three existing algorithms and conduct experiments on simulated networks with a block diagonal structure of $T$ blocks. The regularization parameters $\mu_1$  and $\mu_2$ are selected via 3-fold cross validation over  the grids $\mu_{1,\text{grid}}=\{0.05,0.045,\dots,0.005\}$ and $\mu_{2,\text{grid}}=\{0.02,0.015,0.01,0.005\}$, respectively. 
The corresponding numerical results are presented in Table \ref{tab:2}.  
As shown in the table, the introduction of the screening strategy significantly accelerates all three algorithms, demonstrating its effectiveness in reducing computational complexity.
Nevertheless, the degree of acceleration differs across experiments because the screening rule (\ref{dg:20}) is affected by the parameters $\mu_1$ and $\mu_2$.
When these parameters take relatively large values, this rule can identify a greater number of block diagonal components, resulting in significant speedups.
Conversely, smaller values of $\mu_1$ and $\mu_2$ lead to fewer detected blocks—or even a single block—thereby diminishing the acceleration effect.

\subsubsection{S \& P 500 Stock Price}
In this subsection, the performance of {\sc Ssnpal}, ADMM, and MGL  is evaluated on several data sets constructed from the Standard \& Poor’s 500 stock price\footnote{Available at \url{http://www.yahoo.com}.}, spanning diverse time periods and structural patterns.

We begin with a relatively short time span from January 2004 to December 2006, which yields 755 daily returns for a total of 370 stocks. This data set,  denoted as SPX3a,  is divided into $K = 3$ annual intervals. To address the limited sample size in each period and  improve interpretability, we consider two randomly selected subsets of stocks, with sizes $p = 100$ and $p = 200$.
Moreover, we further evaluate the performance over an extended time horizon using a ten year data set, named SPX10b, covering the years 2007 to 2016. This period includes 2769 daily returns for 272 consistently active stocks (those with uninterrupted price records throughout the time frame). It is divided into $K = 10$ one-year segments, and we again analyze random subsets of size $p = 100$ and $p = 200$.
We also introduce SPX13c, covering the entire range from January 2004 to December 2016, and similarly analyze two randomly selected subsets of stocks with sizes $p = 100$ and $p = 200$.

In addition, a sector-based data set, referred to as SPX13d, is constructed to  study the evolution of direct interactions among stocks, particularly in response to the global financial crisis. The SPX13d includes 212 stocks drawn from five major sectors, as defined by the Global Industry Classification Standard (GICS): 41 from Information Technology, 41 from Consumer Discretionary, 43 from Health Care, 39 from Financials, and 48 from Industrials. It spans the same period as SPX13c (from January 1, 2004, to December 31, 2016) and includes the 2007–2009 global financial crisis.

\begin{table}[htbp]
	\footnotesize
	\caption{\footnotesize {Performance of {\sc Ssnpal}, ADMM, and MGL on stock price data. In the table, ``a'' = {\sc Ssnpal}, ``b'' = ADMM,  and  ``c'' = MGL. Times are shown in seconds.}}\label{tab:22} 
		\begin{tabular}{l@{\hspace{10pt}} l@{\hspace{5pt}} l@{\hspace{10pt}}l @{\hspace{10pt}}c@{\hspace{2pt}}c@{\hspace{2pt}}c@{\hspace{2pt}}c@{\hspace{2pt}}c@{\hspace{23pt}}c@{\hspace{2pt}}c@{\hspace{2pt}}c@{\hspace{2pt}}c@{\hspace{2pt}}c } \toprule
			\multirow{2}{*}{Problem} & 
			\multirow{2}{*}{$\mu_1$} & 
			\multirow{2}{*}{$\mu_2$} & 
			\multirow{2}{*}{Density} & 
			
			\multicolumn{5}{c}{$R_{\text{KKT}}$} & 
			\multicolumn{5}{c}{Time} \\
			\cmidrule(r ){5-9} \cmidrule( ){10-14}  
			(p;K)& &&&a &$|$& b &$|$&c & a&$|$&b &$|$& c         \\		
			\hline
		 
			\input{TableSPX.dat}
		\end{tabular}
\end{table}
In Table \ref{tab:22}, we compare the performance of {\sc Ssnpal}, ADMM, and MGL on four stock price data sets: SPX3a, SPX10b, SPX13c, and SPX13d. For each data set, we consider multiple sparsity levels (ranging from 1\% to 25\%) to evaluate how sensitive the methods are to the sparsity of the cMGGMs solutions.
The last parameter setting (shown in bold) is determined through 3-fold cross validation with turning grids $\mu_{1,\text{grid}}=\mu_{2,\text{grid}}=\left \{ 10^{-3}, 10^{-3.5},\right.\allowbreak\left. 10^{-4}, 10^{-4.5},10^{-5}, 10^{-5.5}, 10^{-6}\right \} $. 
As shown in the table, {\sc Ssnpal} consistently outperforms both ADMM and MGL across all cases. In addition, both {\sc Ssnpal} and ADMM are able to successfully solve all cases in under one hour, whereas MGL fails to converge for some cases within the same time limit. 
The significant  variation in computational time for MGL across problem dimensions indicates that its computational efficiency  is less stable than that of {\sc Ssnpal} and ADMM.
Overall, the numerical results demonstrate that the proposed {\sc Ssnpal} algorithm can solve  the cMGGMs  efficiently and robustly when applied to the stock price data sets.

\begin{figure}[htbp]
	\centering
	\hspace*{-1 em}
	\begin{minipage}[t]{0.37\textwidth}
		\centering
		\includegraphics[width=\linewidth]{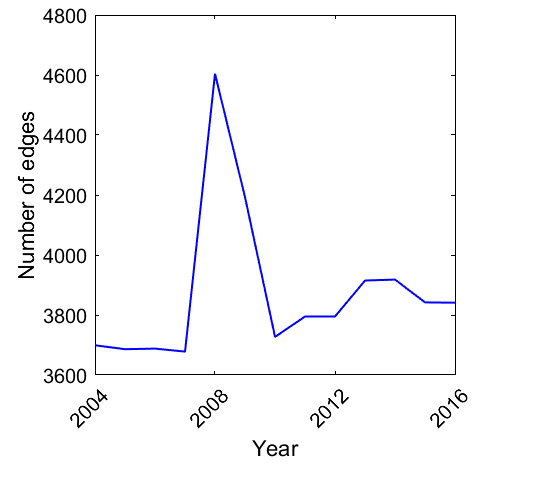}\\
		(a) 
	\end{minipage}
	\hspace{-2.5 em}
	\begin{minipage}[t]{0.37\textwidth}
		\centering
		\includegraphics[width=\linewidth]{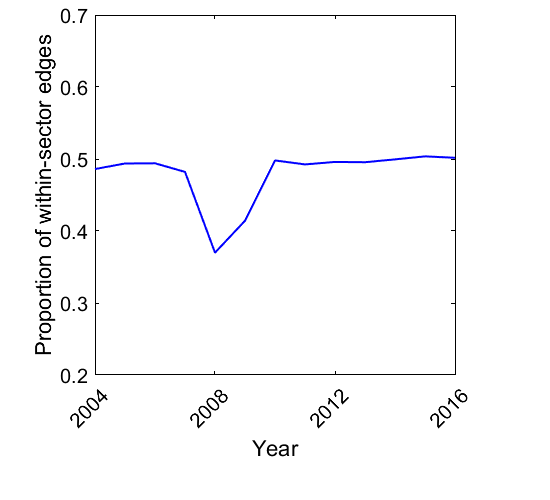}\\
		(b) 
	\end{minipage}
	\hspace{-2.5em}
	\begin{minipage}[t]{0.37\textwidth}
		\centering
		\includegraphics[width=\linewidth]{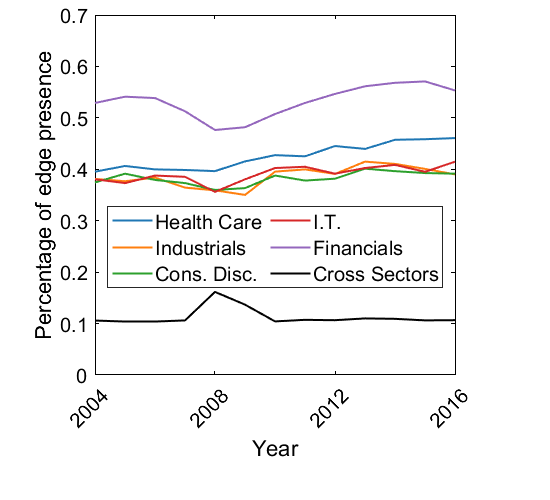}\\
		(c)
	\end{minipage}	
	\caption{Patterns of the estimated precision matrices for the stock price data set over the 9 year study period.  (a) total number of edges estimated to be nonzero over time; (b) proportion of within-sector edges among all detected edges over time; (c) percentage of edge presence for each sector and across sectors over time.}
	\label{fig:SP}
\end{figure}

Figure \ref{fig:SP} illustrates the sparse patterns of 13 estimated precision matrices from 2004 to
2016 on the SPX13d data set with  cross validation selected turning parameters $(\hat{\mu}_1, \hat{\mu}_2)=(10^{-4.5},10^{-5})$, $p=212$. 
Figure~\ref{fig:SP}(a) shows the number of edges in the fitted graph over time. The cMGGMs captures increased interactions among stocks during the financial crisis, with the number of edges peaking around 2008 and remaining above pre-crisis levels afterward.
Figure~\ref{fig:SP}(b) displays the proportion of within-sector edges among the total number of detected edges. During the entire time period,  this proportion decreased during the financial crisis due to increased cross-sector interaction, and eventually stabilized after 2010.
Figure~\ref{fig:SP}(c) shows the sector-wise percentage of presence of within-sector edges and the percentage of presence of cross-sector edges. 
As can be seen from this figure, the within-sector percentages are much higher than the cross-sector percentage, confirming that the cMGGMs can  capture the underlying sector structure. Moreover, the Financials sector exhibits the highest within-sector connectivity, indicating that the stocks in this sector have been consistently highly interacting with each other.
In summary, the graphs estimated by the cMGGMs offer clearer insights into the evolving interacting relationships among stocks and effectively reveal  underlying sector structure of stocks.

\subsubsection{ University webpages}
In this subsection, we compare the performance of {\sc Ssnpal}, ADMM, and MGL on the university webpages data set\footnote{Available at \url{ http://ana.cachopo.org/datasets-for-single-label-text-categorization}.}.  
For completeness, we briefly describe this data set, which has been discussed in 
 \cite{zhang2021efficient}.
The original data, collected from computer science departments of several universities in 1997, was manually categorized into seven classes: Student, Faculty, Course, Project, Staff, Department, and other.
Following the preprocessing approach in 
 \cite{cachopo2007improving}, we focus on a subset comprising the first four classes,  where standard stemming techniques were applied.  The data set was randomly partitioned into a training set (Webtrain), containing two-thirds of the webpages, and a testing set (Webtest), consisting of the remaining one-third.

\begin{table}[htbp]
	\footnotesize
	\caption{\footnotesize {Performance of {\sc Ssnpal}, ADMM, and MGL on university webpages data. In the table, ``a'' = {\sc Ssnpal}, ``b'' = ADMM,  and  ``c'' = MGL. Times are shown in seconds.}}\label{tab:23}
		\begin{tabular}{l@{\hspace{10pt}} l@{\hspace{5pt}} l@{\hspace{10pt}}l @{\hspace{10pt}}c@{\hspace{2pt}}c@{\hspace{2pt}}c@{\hspace{2pt}}c@{\hspace{2pt}}c@{\hspace{23pt}}c@{\hspace{2pt}}c@{\hspace{2pt}}c@{\hspace{2pt}}c@{\hspace{2pt}}c } \toprule
			\multirow{2}{*}{Problem} & 	
			\multirow{2}{*}{$\mu_1$} & 
			\multirow{2}{*}{$\mu_2$} & 
			\multirow{2}{*}{Density} & 
			
			\multicolumn{5}{c}{$R_{\text{KKT}}$} & 
			\multicolumn{5}{c}{Time} \\
			\cmidrule(r ){5-9} \cmidrule( ){10-14}  
			(p;K)& &&&a &$|$& b &$|$&c & a&$|$&b &$|$& c         \\

			\hline
		 	
			\input{Tableweb.dat}
		\end{tabular}
\end{table}
 
Table~\ref{tab:23} reports the comparison results of three algorithms—{\sc Ssnpal}, ADMM, and MGL — on the university webpages data set with data dimension $p=100$, $p=200$, and $p=300$.
For each instance, the first three pairs of $(\mu_1, \mu_2)$ are manually selected to produce solutions with varying sparsity levels, while the final bold pair is obtained from  3-fold cross validation over the tuning grids $\mu_{1,\text{grid}} = \mu_{2,\text{grid}} = \left \{ 10^{-2}, 10^{-2.5}, 10^{-3}, 10^{-3.5}, 10^{-4}, 10^{-4.5}, 10^{-5}\right \}$.  
The numerical results presented in Table \ref{tab:23} indicate that {\sc Ssnpal} outperforms both ADMM and MGL in terms of computational efficiency for every examined webpages data set.
Moreover, {\sc Ssnpal} successfully solves all cases to the desired accuracy within one minute, whereas the runtime of MGL varies sharply with increasing problem dimensions.
This demonstrates the robustness and high efficiency of {\sc Ssnpal} in solving the cMGGMs when applied to the university webpages data set.

\subsubsection{20 newsgroups}

In this subsection, we conduct a comparative evaluation of {\sc Ssnpal}, ADMM, and MGL on the 20 newsgroups data. This data set consists  of documents nearly evenly distributed among 20 distinct newsgroups, each representing a specific topic. 
Some newsgroups exhibit strong topical similarity, such as comp.sys.ibm.pc.hardware and comp.sys.mac.hardware, whereas others are thematically unrelated, for example, misc.forsale and soc.religion.christian.

\begin{table}[htbp]
	\centering
	\caption{\footnotesize {Partition of 20 newsgroups by topics.}}
	\label{tb:5}
	\begin{tabularx}{0.8\textwidth}{>{\raggedright\arraybackslash}X}
		\toprule
		
		\textbf{NG1:}$\:\:\:\:$\begin{tabular}[t]{@{}l@{}}
			comp.graphics, 
			comp.os.ms-windows.misc, \\
			comp.sys.ibm.pc.hardware, 
			comp.sys.mac.hardware, 
			comp.windows.x
		\end{tabular}\\  \hline 
		\addlinespace
		
		\textbf{NG2:}$\:\:\:\:$rec.autos, 
		rec.motorcycles, 
		rec.sport.baseball, 
		rec.sport.hockey \\  \hline 
		\addlinespace
		
		\textbf{NG3:}$\:\:\:\:$sci.crypt, 
		sci.electronics, 
		sci.med, 
		sci.space \\  \hline 
		\addlinespace
		
		\textbf{NG4:}$\:\:\:\:$talk.politics.misc, 
		talk.politics.guns, 
		talk.politics.mideast \\  \hline 
		\addlinespace
		
		\textbf{NG5:}$\:\:\:\:$talk.religion.misc, 
		alt.atheism, 
		soc.religion.christian \\  \hline 
		\addlinespace
		
		\textbf{NG6:}$\:\:\:\:$misc.forsale \\
		\bottomrule
	\end{tabularx}
\end{table}
Table \ref{tb:5} enumerates the 20 newsgroups grouped by subject matter, as obtained from the publicly available repository\footnote{Available at \url{http://qwone.com/~jason/20Newsgroups/}.}.  
For  numerical experiments, we focus on the first five subgroups presented in Table  \ref{tb:5}—NG1 through NG5—which are expected to share common semantic structures. 
Each subgroup comprises multiple classes, and we employ the clustered structure to jointly estimate the precision matrices corresponding to different classes within each subgroup.
A preprocessed version of the 20 newsgroups data, formatted for convenient use in MATLAB, is available from the same source. The downloaded package includes both training and testing sets. Following the procedure described in \cite{guo2011joint}, we generate sample covariance matrices with the number of features set to $p=300$. 

%
%
%
%
%
%
%
%
%
\begin{table}[htbp]
	\footnotesize
	 \caption{\footnotesize {Performance of {\sc Ssnpal}, ADMM, and MGL on 20 newsgroups data. In the table, ``a'' = {\sc Ssnpal}, ``b'' = ADMM,  and  ``c'' = MGL. Times are shown in seconds.}}\label{tab:24}
	\begin{tabular}{l@{\hspace{10pt}} l@{\hspace{5pt}} l@{\hspace{10pt}}l @{\hspace{10pt}}c@{\hspace{2pt}}c@{\hspace{2pt}}c@{\hspace{2pt}}c@{\hspace{2pt}}c@{\hspace{25pt}}c@{\hspace{2pt}}c@{\hspace{2pt}}c@{\hspace{2pt}}c@{\hspace{2pt}}c }
		 
		\toprule
		\multirow{2}{*}{Problem} &

		\multirow{2}{*}{$\mu_1$} & 
		\multirow{2}{*}{$\mu_2$} & 
		\multirow{2}{*}{Density} & 
		
		\multicolumn{5}{c}{$R_{\text{KKT}}$} & 
		\multicolumn{5}{c}{Time} \\
		\cmidrule(r ){5-9} \cmidrule( ){10-14}  
		(p;K)& &&&a &$|$& b &$|$&c & a&$|$&b &$|$& c         \\
		
		\hline
		 	
		\input{TableNG.dat}
	\end{tabular}	
\end{table}
Table~\ref{tab:24} presents the comparison results of {\sc Ssnpal}, ADMM, and MGL on the 20 newsgroups data set.
In each experiment, the pair of parameters $(\mu_1, \mu_2)$ (bold numbers in the table) is selected via  cross validation over the tuning grids $\mu_{1,\text{grid}}=\mu_{2,\text{grid}}=\{10^{-3},\allowbreak
10^{-3.5},\allowbreak
10^{-4},\allowbreak
10^{-4.5},\allowbreak
10^{-5},\allowbreak
10^{-5.5},\allowbreak
10^{-6}\}$.
It is clear from Table \ref{tab:24} that  {\sc Ssnpal} consistently outperforms both ADMM and MGL across all tested cases, which demonstrates its superior computational efficiency in solving the cMGGMs.
Notably, for some difficult instances, MGL requires over an hour to achieve the desired accuracy, whereas {\sc Ssnpal} typically converges within two minutes and demonstrates a clear speed advantage over ADMM.
Overall, these numerical results further confirm the robustness and high efficiency of {\sc Ssnpal} in solving the cMGGMs based on the 20 newsgroups data.
\section{Conclusion}\label{conclusion}
In this paper, we have proposed an inexact proximal augmented Lagrangian method to solve the dual formulation of cMGGMs.
Each subproblem within the framework is efficiently handled by a semismooth Newton method, which makes full use of second order information for fast and stable convergence.
The convergence properties of the {\sc Ssnpal} algorithm were shown to hold automatically under some standard conditions. 
Furthermore, we derived a necessary and sufficient condition under which the cMGGMs solution admits a block diagonal structure for an arbitrary number of graphs, and developed an effective screening rule to accelerate the estimation of large scale multiple graphs.
Extensive numerical experiments on both synthetic and real data sets demonstrate the superior efficiency and robustness of {\sc Ssnpal} compared with the widely used ADMM and the proximal Newton–type method 
 \cite{yang2015fused}.

 \bmhead{Acknowledgements}
 The research of Yong-Jin Liu was funded by National Key Research and Development Program of China (2025YFA1016901), National Natural Science Foundation of China (Grant No. 12271097), Key Program of National Science Foundation of Fujian Province of China (Grant No. 2023J02007), Central Guidance on Local Science and Technology Development Fund of Fujian Province (Grant No. 2023L3003).



\bibliography{references}  

\end{document}